\newtheorem{theorem}{Theorem}[section]
\theoremstyle{definition}
\newtheorem{definition}[theorem]{Definition}
\newtheorem{example}[theorem]{Example}
\newtheorem{proposition}[theorem]{Proposition}
\newtheorem{remark}[theorem]{Remark}
\newtheorem{corollary}[theorem]{Corollary}
\numberwithin{equation}{section}
\begin{document}

\title{Affine Structures on a Ringed Space and Schemes}

\author{Feng-Wen An}
\address{School of Mathematics and Statistics, Wuhan University, Wuhan,
Hubei 430072, People's Republic of China}
\email{fwan@amss.ac.cn}
\subjclass[2000]{14A15, 14A25, 57R55}
\keywords{affine structure, pseudogroup of affine transformations, ringed space,
scheme}

\begin{abstract}
In this paper we  will first introduce the notion of  affine
structures on a ringed space and then obtain several properties. Affine structures on a ringed space, arising mainly from complex analytical
spaces of algebraic schemes over number fields, behave like differential structures on a smooth manifold.

As one does for differential manifolds, we will use pseudogroups of affine transformations
to define affine atlases on a ringed space. An atlas  on a
space is said to be an affine structure if it is maximal. An affine structure is admissible if there is a sheaf on
the underlying space such that they are coincide on all affine charts, which are in deed affine open sets of a scheme. In a rigour manner, a scheme is defined to be a ringed space with a specified
affine structure if the affine structures are in action in some
special cases such as analytical spaces of algebraic schemes. Particularly, by the whole of affine structures on a space,
we will obtain respectively necessary and sufficient conditions that
two spaces are homeomorphic and that two schemes are isomorphic, which
are the two main theorems of the paper. It follows that the whole of affine structures on a space and a scheme, as local data, encode and reflect the global properties of the space and the scheme, respectively.
\end{abstract}
\maketitle

\section{Introduction}

\subsection{Background and Motivation}

As one studies \emph{differential structures} on a manifold such as Milnor \cite{milnor},   \emph{affine structures} on a scheme, taken as counterparts,  will be introduced and discussed in this paper. Here, we will obtain several properties on affine structures in a rigour and systematic manner. These results  in fact will have been applied to the
discussions on the complex analytical space of an algebraic scheme over a number field.

\subsubsection{An affine covering of a scheme}

As well-known, a scheme (or a projective scheme, respectively) is defined to be a
ringed space that can be covered by a family of affine (or
projective, respectively) schemes, called \emph{an affine covering} of the scheme $X$.  An affine scheme is the spectrum of a
commutative ring equipped with the sheaf in an evident manner (\cite{ega,hartshorne}). In the paper it will be seen that such a family of affine schemes determines a unique \emph{affine
structure} on the scheme.

\subsubsection{Each affine covering produces a complex analytical space}

Fixed an algebraic scheme $X$ over a number field $K$. Let $\{A_{\alpha}\}_{\alpha \in\Gamma}$
be a family of finitely generated algebras over $K$ such that their spectra
$Spec{A_{\alpha}}$ cover $X$, i.e., $$\bigcup_{\alpha} Spec{A_{\alpha}}\supseteq X$$ holds. Here, each $A_{\alpha}$ is isomorphic to the
quotient of some polynomial ring $K[t_{1},t_{2},\cdots,t_{n_{\alpha}}]$  by a
finite number of polynomials $$f_{1},f_{2},\cdots,f_{r_{\alpha}}$$ over $K$ in the variables.

By Serre's
\emph{GAGA} (\cite{sga1,serre-gaga}), each open subscheme $Spec{A_{\alpha}}$ has an analytical space
$X^{an}_{\alpha}$ that is defined by the common zeros of the polynomials
$$f_{1},f_{2},\cdots,f_{r_{\alpha}}$$ mentioned above. Gluing these analytical spaces
$X^{an}_{\alpha}$, we will obtain an analytical space $X^{an}$, called the
\emph{complex analytical space} of $X$. It has been seen that such a process has several functorial
properties with respect to $X$ (\cite{sga1,serre-gaga}).

Hence, every affine covering of $X$ produces a complex analytical space $X^{an}$ of $X$.

However, in general, an algebraic scheme $X$ can have many affine coverings. Then what about the complex analytical spaces of $X$ produced by different affine coverings of $X$?

\subsubsection{Different affine coverings can produce different complex analytical spaces}

Let's take an example raised by Serre (\cite{serre-example}):
\begin{quotation}
\emph{Let $V$ be the
nonsingular projective variety over a number field $K$ as defined in \emph{\cite{serre-example}}. Suppose that
$V_{\phi}$ is a conjugate variety of $V$ defined by an isomorphism $\phi$.
Then there is such an isomorphism $\phi$ that the complex analytic spaces $V^{an}$ and $V^{an}_{\phi}$ are not of
the same homotopy type.}
\end{quotation}

Denote by $A$ and $A_{\phi}$
the homogeneous coordinate rings of $V$ and $V_{\phi}$, respectively. Then
we have projective schemes $$X=Spec{A}; \, X_{\phi}=Spec{A_{\phi}}$$
with complex analytical spaces $$X^{an}=V^{an}; \, X_{\phi}^{an}=V^{an}_{\phi}$$ respectively.

This shows that different affine coverings of the same scheme can produce different complex analytical spaces.

For this phenomenon,  there are also some more examples arising from abelian varieties and Shimura varieties (\cite{deligne-milne,milne-suh}).

Now we come to a conclusion that there do exist evidences, such as related examples in \cite{deligne-milne,sga1,milne-suh,serre-example}, that different affine
coverings can produce different complex analytical spaces for a fixed algebraic scheme.

Why there does exist such a phenomena? It is an interesting problem. Such related topics will be
discussed in our subsequent papers.

\subsubsection{Several problems}

In the above it has been seen that different affine coverings of an algebraic scheme can produce different complex analytical space. Then it is natural for one to have several questions (similarly for projective schemes)
such as the following:
\begin{itemize}
\item    \emph{How can we give a restrictive definition for such a family of affine schemes to
    patch a scheme?}

\item    \emph{Does there exist another family of affine schemes covering
    the fixed scheme and making it into a scheme?}

\item    \emph{Given another family of affine schemes which cover the fixed scheme.
    Will we obtain the same scheme?}

\item    \emph{Given a ringed space. How many families of affine
    schemes do patch it? How many schemes do there exist on the same underlying
    topological space?}

\item    \emph{In particular, given an algebraic scheme $X$ over a number field. There can be
    many families of affine schemes covering $X$. Each such a family produces an analytical
    space $X^{an}$ of $X$. When are these analytical spaces $X^{an}$ either  diffeomorphic to each other or of
    the same homotopy types?}
\end{itemize}

Several questions related to the above will be in part discussed in the paper.

At the same time, affine structures have also been encountered by us during the discussions on a type of Galois covers of algebraic and arithmetic schemes, where such a scheme is said to be \emph{Galois closed} if it has only one affine
structures.

The Galois closed schemes have several nice properties with applications to
class fields, for example, their Galois groups of rational fields are
isomorphic to their groups of automorphisms (for instance, see \cite{an}).

\subsection{Techniques in the Paper}

As a counterpart, an affine structure on a scheme behaves exactly like a differential
structure on a manifold.

\subsubsection{A smooth manifold can have many differential structures}

In a classical way, a differential manifold is a
topological space covered by a family of open subsets in some Euclidean space,
which is obtained by glueing such a family of open sets as patches. Under some
technical conditions, such a family of open sets is called a \emph{differential
structure} on the manifold.

Nowadays there have been many well-known facts about
manifolds and their differential structures:
\begin{itemize}

\item    \emph{There exist differential manifolds which have many
differential structures on them that are not diffeomorphic to each
other} (\cite{milnor}).

\item     \emph{There exist topological spaces that have no differential structures on
them}.

\item     \emph{There exist  differential
manifolds will be of different properties if we establish different differential
structures on the (same) underlying spaces}.
\end{itemize}

\subsubsection{Affine structure v.s. differential structure}

Many approaches and skills in topology can be applied here to schemes. The techniques in the
present paper, which we borrowed from differential topology, is thus not new to
some certain degree.

\subsubsection{Known related results on affine coverings}

For affine coverings, there have been several informal
discussions, for example, see \cite{hartshorne,shafarevich}, on how to patch a scheme, that is, how to glue a given
family of affine schemes into a scheme; for a more abstract case of categories,
there have been fibered categories and groupoids (\cite{sga3-1,sga1}) which can be
applied to discuss such coverings.

However, all those discussions involved in \cite{sga3-1,sga1,hartshorne,shafarevich} deal only with \emph{coverings},
that is, a family of abstract objects over a fixed object.

\subsubsection{Further results obtained in the paper}

In the paper here we will have further discussions on
such affine coverings and several results will be obtained. We will introduce and discuss affine
structures in a rigor and systematic manner, where an affine structure is an affine covering that is taken as maximal families of objects  covering a given object and satisfying the certain properties.

In fact, the affine structures will afford a platform to
us to discuss the problems mentioned above \S 1.1. In particular, an
algebraic scheme over $\mathbb{C}$ can have a unique associative analytical
space if there exists only one affine structure on its underlying space.

Furthermore, in a rigor manner, a scheme is a locally ringed space with a
specified affine structure on it; it follows that in such a case an algebraic
scheme over a number field can be associated exactly with a unique complex
analytical space.

The main results obtained in the paper are that by the whole of affine structures on a space,
it will be seen whether
two spaces are homeomorphic  and whether two schemes are isomorphic. In other words, the whole of affine structures on a space and a scheme, as local data, encode and reflect the global properties of the space and the scheme, respectively.

Such results can be applied to complex analytical spaces of algebraic schemes and arithmetic schemes.

\subsection{Outline of the Paper}

At last we give an outline of the paper.
In \S 2 we will use \emph{pseudogroups $\Gamma$ of affine transformations} to
define an \emph{affine $\Gamma-$atlas} on a topological space, which consists of a
family of \emph{affine charts}. An \emph{affine $\Gamma-$structure} on a space is an affine
$\Gamma-$atlas which is maximal. Our discussion can be regarded as an
algebraic version of differential structures (\cite{str,kobayashi}).

In \S 3 an affine $\Gamma-$structure on a space  is said to be
\emph{admissible} if there is a sheaf on the space such that they are coincide with
each other on each affine chart. Here, such a sheaf is called an \emph{extension} of
the given affine structure.

An affine structure which is not admissible will be
of no practical use.

Given a  scheme $(X,\mathcal{O}_{X})$ in the usual manner (\cite{ega,hartshorne}). In \S 4
we will discuss the special types of affine structures on the space $X$, called
the \emph{canonical} and the \emph{relative canonical} affine structures in the scheme
$(X,\mathcal{O}_{X})$ respectively. Their extensions are called the \emph{associate
schemes} of $(X,\mathcal{O}_{X})$.

Every scheme has an associate scheme. In
particular, a scheme itself is an associate scheme of it. As schemes, a fixed
scheme and their associate schemes are isomorphic with each other.

Now put
\begin{itemize}
\item \emph{$\mathbb{A}\left( X\right)\triangleq$  the set of all admissible affine structures
on a topological space $X$;}

\item \emph{$ \mathbb{A}_{0}\left(
X,\mathcal{O}_{X}\right) \triangleq$  the set of all the relative
canonical affine structures in a scheme} $\left( X,\mathcal{O}_{X}\right)$.
\end{itemize}

Using the set of affine structures on a space, in \S 5 we will give the statements of the two main
theorems in the paper:

\textbf{Theorem 5.1.} \emph{Let $X$ and $Y$ be two
topological spaces such that either $\mathbb{A}\left(
X\right)\not=\emptyset$ or $\mathbb{A}\left( Y\right)\not=\emptyset$
holds. Then $X$ and $Y$ are homeomorphic if and only if there is $$ \mathbb{A}\left(
X\right) =\mathbb{A}\left( Y\right).$$}

\textbf{Theorem 5.2.} \emph{Any two schemes $\left(
X,\mathcal{O}_{X}\right) $ and $ \left( Y, \mathcal{O}_{Y}\right) $ are
isomorphic if and only if we have $$ \mathbb{A}_{0}\left( X,\mathcal{O}_{X}\right)
\cong \mathbb{A}_{0}\left( Y, \mathcal{O}_{Y}\right) .$$}

From the two main theorems it will be seen that the whole of affine structures on a space and a scheme, as local data, encode and reflect the global properties of the space and the scheme, respectively. Such results will be applied to complex analytical spaces of algebraic schemes and arithmetic schemes. The two theorems will be proved in \S 7 and \S 8, respectively.

As a conclusion, in \S 6 we will give several concluding remarks. Particularly, we will come to a conclusion that to be
precisely defined, a scheme should be a locally ringed space together with a
given admissible affine structure on it if the affine structures are in action in a
particular case.

\bigskip

\textbf{Acknowledgment}
The author would like to express his sincere
gratitude to Professor Li Banghe for his advice and instructions
on algebraic geometry and topology.

\section{Definitions for Affine Structures}

In this section we will introduce affine structures on a space in a evident manner as one does for differential structures on a space (for instance, see \cite{str,kobayashi}).

\subsection{Pseudogroup of Affine Transformations}

Let $\mathfrak{Comm}$ be the category of commutative rings with identities,
and $\mathfrak{Comm}/k$ the category of finitely generated algebras over a
field $k.$ Here, a pseudogroup (or groupoid) is a small category in which every
morphism is invertible (\cite{maclane}).

\begin{definition}
A \textbf{pseudogroup $\Gamma$ of affine transformations}, as a subcategory of $\mathfrak{Comm}$, is a pseudogroup of isomorphisms between commutative rings satisfying the
conditions 1-5:
\begin{enumerate}
\item Each $\sigma\in\Gamma$ is an isomorphism from
a ring $dom\left( \sigma\right) $ onto a ring $rang\left( \sigma\right)$
contained in $\Gamma$, called the \textbf{domain} and
\textbf{range} of $\sigma$, respectively.

\item If $\sigma\in\Gamma$,  the inverse
$\sigma^{-1}$ is contained in $\Gamma.$

\item The identity map $id_{A}$ on $A$ is contained in $%
\Gamma$ if there is some $%
\delta\in\Gamma$ with $dom\left( \delta\right) =A.$

\item If $\sigma\in\Gamma$,  the isomorphism
induced by $\sigma$ defined on the localization $dom\left(
\sigma\right)_{f}$ of the ring $dom\left( \sigma\right)$ at any
 $0\not= f\in dom\left( \sigma\right) $ is contained in $\Gamma.$

\item Given any $\sigma,\delta\in\Gamma$. Then
the isomorphism factorized by $dom\left( \tau\right) $ from
$dom\left( \sigma\right) _{f}$ onto $rang\left( \delta\right) _{g}$
is contained in $\Gamma$ if for some
$\tau\in\Gamma$ there are isomorphisms $dom\left( \tau\right) \cong
dom\left( \sigma\right) _{f}$ and $dom\left( \tau\right) \cong
rang\left( \delta\right) _{g}$ with $0\not= f\in dom\left(
\sigma\right) $ and $0\not= g\in rang\left( \delta\right) .$
\end{enumerate}

Such a pseudogroup $\Gamma$ is said to be a \textbf{pseudogroup of } $k-$\textbf{affine transformation} if $\Gamma$
is contained in the category $\mathfrak{Comm}{/k}$, or equivalently,
if each isomorphism in $\Gamma$ is an isomorphism of finitely
generated algebras over a field $k.$
\end{definition}

\subsection{Affine Charts and Affine Atlas}

For a topological space, we give the notions of affine charts and affine atlas.

\begin{definition}
Let $X$ be a topological space and  $\Gamma$ a
pseudogroup of affine transformations. Then an \textbf{affine }$\Gamma-$\textbf{atlas}
 $\mathcal{A}\left( X,\Gamma\right) $ on $X$ is a collection of pairs $\left( U_{j},\varphi_{j}\right)
$ with $j\in\Delta$, called \textbf{affine charts}, satisfying the conditions
1-3:
\begin{enumerate}
\item {For every pair} $\left( U_{j},\varphi
_{j}\right) \in\mathcal{A}\left( X,\Gamma\right) ,${
}$U_{j}${ is an open subset of }$X${ and
}$\varphi_{j}${ is an homeomorphism of }$U_{j}${ onto
}$Spec\left( A_{j}\right) ,${ where }$A_{j}${ is a
commutative ring contained in $\Gamma$.}

\item $\bigcup_{j\in\Delta} U_{j}\supseteq X${ is an open covering
of }$X.$

\item {Given any} $\left( U_{i},\varphi
_{i}\right) ,\left( U_{j},\varphi_{j}\right) \in\mathcal{A}\left(
X,\Gamma\right) $ {with} $U_{i}\cap U_{j}\not
=\varnothing${. There exists a pair }$\left(
W_{ij},\varphi_{ij}\right) \in\mathcal{A}\left( X,\Gamma\right) $
{such that }$W_{ij}\subseteq U_{i}\cap U_{j}${ and that the
isomorphism from the localization }$\left( A_{j}\right)
_{f_{j}}${ onto the localization }$\left( A_{i}\right)
_{f_{i}}${ that is induced by the restriction
}$$\varphi_{j}\circ\varphi_{i}^{-1}\mid_{W_{ij}}:\varphi_{i}(W_{ij})
\rightarrow\varphi_{j}(W_{ij})$$
{is also contained in }$\Gamma$. {Here} $A_{i}$ and $A_{j}$
{are commutative rings contained in $\Gamma$ such that}
$$\varphi _{i}\left( U_{i}\right) =SpecA_{i}\,  and \,
\varphi_{j}\left( U_{j}\right) =SpecA_{j}$$ {hold and that there
are homeomorphisms}
$$\varphi_{i}\left( W_{ij}\right) \cong Spec\left( A_{i}\right)
_{f_{i}}{ and }\varphi_{j}\left( W_{ij}\right) \cong Spec\left(
A_{j}\right) _{f_{j}}$${for some} $f_{i}\in A_{i}$ \, and \, $%
f_{j}\in A_{j}.$
\end{enumerate}

Moreover, $\mathcal{A}%
\left( X,\Gamma\right) ${ is said to be a $k-$\textbf{affine }$\Gamma- $\textbf{atlas}}{ on }$X$ {if
}$\Gamma$ is a subcategory of $\mathfrak{Comm}/k.$

{An affine }$\Gamma-${atlas }$\mathcal{A}\left( X,\Gamma\right) $%
{ on }$X${ is said to be \textbf{complete} (or
\textbf{maximal}) if it can not be contained properly in any other
affine }$\Gamma-${atlas of }$X.$
\end{definition}

\begin{remark}
{The above construction in Definition 2.2 is
well-defined since the open covering }$\{U_{j}\}$ {such that}
$\left( U_{j},\varphi _{j}\right) \in\mathcal{A}\left(
X,\Gamma\right) ${ is a base for the topology on }$X.$

{Let }$\mathcal{A}\left( X,\Gamma\right) $ {%
and }$\mathcal{A}\left( X,\Gamma^{\prime}\right) $ {be atlases
on a
space }$X.$ {Then }$\Gamma\supseteq\Gamma^{\prime}${ holds if }$%
\mathcal{A}\left( X,\Gamma\right) \supseteq\mathcal{A}\left(
X,\Gamma^{\prime}\right) .$
\end{remark}

\subsection{Affine Structures}

As one has differential structures on a manifold, here we have affine structures on a space such as the following.

\begin{definition}
{Let }$X${ be a topological space and }$\Gamma${ a
pseudogroup of affine transformations. Then two affine }$\Gamma-${%
atlases }$\mathcal{A}$ {and} $\mathcal{A}^{\prime}${ on }$X${%
\ are said to be $\Gamma-$\textbf{compatible}} {if the
 condition below is satisfied:}
\begin{quotation}
For any $\left( U,\varphi\right) \in\mathcal{A}$ {and }$%
\left( U^{\prime},\varphi^{\prime}\right) \in\mathcal{A}^{\prime}$
{with} $U\cap U^{\prime}\not =\varnothing$  {there
exists an affine chart }$\left( W,\varphi^{\prime\prime}\right) \in\mathcal{A%
}\bigcap\mathcal{A}^{\prime}$ {such that }$W\subseteq U\cap U^{\prime}$%
{ and that the isomorphism from the localization }$\left( A\right)
_{f}${ onto the localization }$\left( A^{\prime}\right)
_{f^{\prime}}${
induced by the restriction }$\varphi^{\prime}\circ\varphi^{-1}\mid _{W}$%
{ is also contained in }$\Gamma$. {Here }$A$ and $A^{\prime}$
{are commutative rings contained in $\Gamma$ such that}
$\varphi\left( U\right) =SpecA {\, and \, } \varphi^{\prime}\left(
U^{\prime}\right) =SpecA^{\prime}$ {hold and that there are homeomorphisms}
$\varphi\left( W\right) \cong Spec\left( A\right)_{f}{\, and \,
}\varphi^{\prime}\left( W\right) \cong Spec\left( A^{\prime}\right)
_{f^{\prime}}$ {for some} $f\in A$ {and} $f^{\prime}\in
A^{\prime}.$
\end{quotation}
\end{definition}

\begin{proposition}
{Let }$X${ be a topological space and let }$\Gamma${ be a
pseudogroup of affine transformations. Then for any given affine }$\Gamma-$%
{atlas $\mathcal{A}$ on }$X,${ there is a unique complete affine }%
$\Gamma-${atlas }$\mathcal{A}_{m}${ on }$X${ such
that}
\begin{itemize}
\item $\mathcal{A\subseteq A}_{m};$

\item $\mathcal{A}$ {and }$\mathcal{A}_{m}$ {%
are }$\Gamma-${compatible.}
\end{itemize}
\end{proposition}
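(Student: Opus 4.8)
The plan is to mimic the classical construction of the unique maximal atlas on a differential manifold, replacing the $C^{\infty}$ pseudogroup by the pseudogroup $\Gamma$ of affine transformations. First I would define the candidate
$$\mathcal{A}_{m} = \bigcup \{\, \mathcal{B} : \mathcal{B} \text{ is an affine } \Gamma\text{-atlas on } X \text{ that is } \Gamma\text{-compatible with } \mathcal{A} \,\},$$
equivalently the collection of all affine charts $(U,\varphi)$, with $\varphi$ a homeomorphism of an open set $U$ onto some $Spec(A)$ for $A$ contained in $\Gamma$, that are $\Gamma$-compatible with every chart of $\mathcal{A}$ in the sense of Definition 2.4. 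A preliminary observation is that $\mathcal{A}$ is $\Gamma$-compatible with itself, which is precisely condition 3 of Definition 2.2. Hence $\mathcal{A}$ is one of the atlases in the union, so $\mathcal{A}\subseteq\mathcal{A}_{m}$; in particular $\mathcal{A}_{m}$ inherits the covering condition 2, and condition 1 holds by construction.

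The main work is to verify condition 3 of Definition 2.2 for $\mathcal{A}_{m}$, which amounts to showing that $\Gamma$-compatibility with $\mathcal{A}$ is transitive through $\mathcal{A}$: if $(U_{i},\varphi_{i})$ and $(U_{j},\varphi_{j})$ both lie in $\mathcal{A}_{m}$ and $U_{i}\cap U_{j}\neq\varnothing$, then they are $\Gamma$-compatible with each other. Given a point $x\in U_{i}\cap U_{j}$, I would choose a chart $(U_{k},\varphi_{k})\in\mathcal{A}$ with $x\in U_{k}$, possible since $\mathcal{A}$ covers $X$. Compatibility of each of $(U_{i},\varphi_{i})$ and $(U_{j},\varphi_{j})$ with $(U_{k},\varphi_{k})$ yields, near $x$, refinement charts on which the transitions $\varphi_{k}\circ\varphi_{i}^{-1}$ and $\varphi_{k}\circ\varphi_{j}^{-1}$ induce localization isomorphisms lying in $\Gamma$. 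Using that the chart domains form a base for the topology (Remark 2.3) together with the closure of $\Gamma$ under localization (condition 4 of Definition 2.1), I would shrink to a single basic open set $W\subseteq U_{i}\cap U_{j}\cap U_{k}$ on which both transitions are simultaneously defined as localizations belonging to $\Gamma$. The transition $\varphi_{j}\circ\varphi_{i}^{-1}\mid_{W}$ then factors as $(\varphi_{j}\circ\varphi_{k}^{-1})\circ(\varphi_{k}\circ\varphi_{i}^{-1})$, and by closure under inverses (condition 2) and under the factorization of composites of localized isomorphisms (condition 5 of Definition 2.1) the induced isomorphism of the relevant localizations again lies in $\Gamma$. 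This produces the chart $(W,\varphi_{ij})\in\mathcal{A}_{m}$ required by condition 3. The identical argument, with $(U_{j},\varphi_{j})$ replaced by an arbitrary chart of $\mathcal{A}$, shows that $\mathcal{A}$ and $\mathcal{A}_{m}$ are $\Gamma$-compatible.

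For completeness of $\mathcal{A}_{m}$, suppose $\mathcal{A}_{m}\subseteq\mathcal{B}$ for some affine $\Gamma$-atlas $\mathcal{B}$. Any chart of $\mathcal{B}$ overlaps the charts of $\mathcal{A}\subseteq\mathcal{A}_{m}\subseteq\mathcal{B}$, and applying condition 3 inside $\mathcal{B}$ shows that each chart of $\mathcal{B}$ is $\Gamma$-compatible with $\mathcal{A}$; hence it already lies in $\mathcal{A}_{m}$, forcing $\mathcal{B}=\mathcal{A}_{m}$. For uniqueness, let $\mathcal{A}_{m}^{\prime}$ be any complete atlas with $\mathcal{A}\subseteq\mathcal{A}_{m}^{\prime}$ and $\mathcal{A},\mathcal{A}_{m}^{\prime}$ $\Gamma$-compatible. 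Then every chart of $\mathcal{A}_{m}^{\prime}$ is $\Gamma$-compatible with $\mathcal{A}$, so $\mathcal{A}_{m}^{\prime}\subseteq\mathcal{A}_{m}$; since $\mathcal{A}_{m}^{\prime}$ is complete and $\mathcal{A}_{m}$ is an affine $\Gamma$-atlas containing it, maximality gives $\mathcal{A}_{m}^{\prime}=\mathcal{A}_{m}$.

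The step I expect to be the main obstacle is the transitivity argument of the second paragraph, namely verifying that the composite transition isomorphism between two charts of $\mathcal{A}_{m}$ still belongs to $\Gamma$. This is exactly where conditions 4 and 5 of Definition 2.1 are indispensable, since they guarantee that localizing and composing the chart transitions keeps us inside the pseudogroup, and the delicate bookkeeping lies in producing a single basic open set $W$ on which all the relevant localizations and their composite are simultaneously defined. Everything else, that is, the covering property, the inclusion $\mathcal{A}\subseteq\mathcal{A}_{m}$, and the two maximality arguments, is formal once this closure under composition of compatibilities is in hand.
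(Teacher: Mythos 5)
Your proposal is correct, but it takes a genuinely different route from the paper. The paper proves existence non-constructively: it forms the family $\Sigma$ of all affine $\Gamma$-atlases that contain $\mathcal{A}$ and are $\Gamma$-compatible with $\mathcal{A}$, orders it by inclusion, and invokes Zorn's Lemma to get a maximal element; uniqueness is then argued by noting that two maximal elements of $\Sigma$ are each $\Gamma$-compatible with $\mathcal{A}$, hence (the paper asserts) compatible with each other, so their union would again lie in $\Sigma$, contradicting maximality. You instead build $\mathcal{A}_{m}$ explicitly as the set of all charts $\Gamma$-compatible with every chart of $\mathcal{A}$, in the classical Kobayashi--Nomizu style, and verify the atlas axioms directly. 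Note that both arguments ultimately hinge on the same key fact --- that compatibility with $\mathcal{A}$ is transitive, i.e.\ two charts (or atlases) each compatible with $\mathcal{A}$ are compatible with one another --- but where the paper dismisses this with ``it is seen that,'' you actually sketch its proof, composing transitions through a chart of $\mathcal{A}$ and using the base-of-topology remark together with closure of $\Gamma$ under localization and factorization (conditions 4 and 5 of Definition 2.1). Your approach buys two things the paper's does not: an explicit, checkable description of the maximal atlas (membership is chart-by-chart compatibility with $\mathcal{A}$), and a direct verification that $\mathcal{A}_{m}$ is complete in the absolute sense of Definition 2.2, whereas a Zorn maximal element of $\Sigma$ is a priori only maximal among atlases containing and compatible with $\mathcal{A}$, a gap the paper leaves unaddressed. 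The paper's route, in exchange, is shorter and defers all chart-level bookkeeping to the single asserted compatibility claim.
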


{In such a case, we will say that $\mathcal{A}$ is a \textbf{base} for
}$\mathcal{A}_{m}$ {and} $\mathcal{A}_{m}${ is the
\textbf{complete affine }$\Gamma-$\textbf{atlas}}{ determined
by $\mathcal{A}.$}

\begin{proof}
Prove the existence. Let $\Sigma$ be the collection of affine $\Gamma-$%
atlases $\mathcal{A}_{\alpha}$ on $X$ such that
$\mathcal{A}\subseteq\mathcal{A}_{\alpha}$ and that
$\mathcal{A}$ and $\mathcal{A}_{\alpha}$ are $\Gamma-$compatible.

Then $\Sigma$ is a partially ordered set together with the inclusions of sets
$\mathcal{%
A}_{\alpha}\subseteq\mathcal{A}_{\beta}$ for any $\mathcal{A}_{\alpha},%
\mathcal{A}_{\beta}\in\Sigma.$ It is clear that every totally
ordered subset of $\Sigma$ has a upper bound in $\Sigma$. By Zorn's
Lemma, $\Sigma$ has maximal elements.

Prove the uniqueness. Let $\mathcal{A}_{m}$ and $\mathcal{A}_{m}^{\prime}$
be two maximal elements of $\Sigma.$ Then we must have $\mathcal{A}_{m}=%
\mathcal{A}_{m}^{\prime}.$ Otherwise, hypothesize $\mathcal{A}_{m}\not =%
\mathcal{A}_{m}^{\prime}.$ It is seen that $\mathcal{A}_{m}$ and $\mathcal{A}%
_{m}^{\prime}$ are $\Gamma-$compatible since they are $\Gamma
-$compatible respectively
with $\mathcal{A}$; then the union $\mathcal{A}_{m}\cup\mathcal{%
A}_{m}^{\prime}$ is contained in $\Sigma,$ where we will obtain a
contradiction.
\end{proof}

\begin{definition}
{Let }$X${ be a topological space. An \textbf{affine }}$%
\Gamma -${\textbf{structure} on }$X$ {is a complete affine }$%
\Gamma -${atlas }$\mathcal{A}\left( \Gamma \right) $ {on
}$X,$ {where $\Gamma $ is a given pseudogroup of affine
transformations.}
\end{definition}

Likewise, we define a {$k-$\textbf{affine
}$\Gamma-$\textbf{structure}} if $\Gamma\subseteq\mathfrak{Comm}/k.$

\section{Admissible Affine Structures}

By Proposition 2.1 it is seen that an affine atlas on a topological space
$X$ determines a unique affine structure on it. From this view of point, we
sometimes identify an affine atlas on $X$ with its determined complete
affine structure on $X.$ In this section will discuss admissible affine structures on a space. On a given space, only admissible affine structures are interesting and are of the practical uses.

\begin{definition}
{Let $\mathcal{A}$}$\left( \Gamma \right) ${ be an affine }$%
\Gamma -${structure on a topological space }$X${. Suppose
that there exists a locally ringed space }$\left(
X,\mathcal{F}\right) $ such that
$\varphi _{\alpha \ast }\mathcal{F}\mid
_{U_{\alpha }}\left( SpecA_{\alpha }\right) =A_{\alpha }$ holds for each $\left( U_{\alpha },\varphi
_{\alpha }\right) \in ${$\mathcal{A}$}$\left( \Gamma \right) $,
{where} $A_{\alpha }$ {is a commutative ring contained in
$\Gamma$ with} $\varphi _{\alpha }\left( U_{\alpha }\right)
=SpecA_{\alpha }.$

{Then }$\mathcal{A}\left( \Gamma
\right) ${ is said to be an \textbf{admissible affine structure} on }$%
X$ {and }$\left( X,\mathcal{F}\right) $ {is said to be an
\textbf{extension}} {of the affine }$\Gamma -${structure }$\mathcal{%
A}\left( \Gamma \right) .$
\end{definition}

\begin{proposition}
{All extensions of an admissible affine
structure on a topological space are schemes which are isomorphic with each other.}
\end{proposition}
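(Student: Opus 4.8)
The plan is to establish the two assertions in turn: first, that every extension $(X,\mathcal{F})$ is a scheme, and second, that any two extensions of the same admissible affine structure $\mathcal{A}(\Gamma)$ are isomorphic as schemes. Throughout, the crucial leverage comes from combining the defining condition of admissibility with the completeness (maximality) of the atlas $\mathcal{A}(\Gamma)$.

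To see that an extension $(X,\mathcal{F})$ is a scheme, I would argue that for each chart $(U_{\alpha},\varphi_{\alpha})\in\mathcal{A}(\Gamma)$ the locally ringed space $(U_{\alpha},\mathcal{F}\mid_{U_{\alpha}})$ is isomorphic, via $\varphi_{\alpha}$, to the affine scheme $\mathrm{Spec}\,A_{\alpha}$ with its structure sheaf; since the $U_{\alpha}$ cover $X$ (condition 2 of Definition 2.2), this exhibits $(X,\mathcal{F})$ as a scheme. The admissibility hypothesis only directly supplies the value of $\varphi_{\alpha\ast}\mathcal{F}\mid_{U_{\alpha}}$ on the whole of $\mathrm{Spec}\,A_{\alpha}$, namely $A_{\alpha}$. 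The point is to upgrade this to an identification of the entire sheaf. Here I would invoke completeness: for each $f\in A_{\alpha}$, condition 4 of Definition 2.1 places the localization $(A_{\alpha})_{f}$ in $\Gamma$, and the pair $(\varphi_{\alpha}^{-1}(D(f)),\psi_{f})$, where $\psi_{f}$ composes $\varphi_{\alpha}$ with the canonical homeomorphism $D(f)\cong\mathrm{Spec}\,(A_{\alpha})_{f}$, is $\Gamma$-compatible with $\mathcal{A}(\Gamma)$, hence lies in the complete atlas. Applying the admissibility condition to this localization chart yields
\[
\varphi_{\alpha\ast}\mathcal{F}\mid_{U_{\alpha}}(D(f))=\mathcal{F}(\varphi_{\alpha}^{-1}(D(f)))=(A_{\alpha})_{f},
\]
with the restriction maps being the canonical localization maps. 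Since the $D(f)$ form a base of $\mathrm{Spec}\,A_{\alpha}$, this identifies $\varphi_{\alpha\ast}\mathcal{F}\mid_{U_{\alpha}}$ with the structure sheaf of $\mathrm{Spec}\,A_{\alpha}$, so $(U_{\alpha},\mathcal{F}\mid_{U_{\alpha}})$ is an affine scheme.

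For the second assertion, let $(X,\mathcal{F})$ and $(X,\mathcal{G})$ be two extensions; note that they share the underlying space $X$. By the first part, on each chart both $\varphi_{\alpha\ast}\mathcal{F}\mid_{U_{\alpha}}$ and $\varphi_{\alpha\ast}\mathcal{G}\mid_{U_{\alpha}}$ are canonically equal to $A_{\alpha}$ (the admissibility condition being an equality, not merely an abstract isomorphism), which furnishes a canonical isomorphism $\theta_{\alpha}\colon\mathcal{F}\mid_{U_{\alpha}}\to\mathcal{G}\mid_{U_{\alpha}}$ covering the identity on $U_{\alpha}$. I would then glue the $\theta_{\alpha}$ into a global isomorphism $\mathcal{F}\cong\mathcal{G}$: on an overlap $U_{\alpha}\cap U_{\beta}$ the two identifications are both forced by the same transition data recorded in $\mathcal{A}(\Gamma)$ (condition 3 of Definition 2.2), so $\theta_{\alpha}$ and $\theta_{\beta}$ agree there. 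Together with the identity on $X$ this gives an isomorphism of locally ringed spaces, that is, an isomorphism of schemes.

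The main obstacle I anticipate is precisely the upgrade carried out in the second paragraph: the admissibility condition as stated constrains only the global sections of $\varphi_{\alpha\ast}\mathcal{F}\mid_{U_{\alpha}}$, whereas one needs to recover the full sheaf. Everything hinges on the claim that the complete atlas actually contains all the localization charts $(\varphi_{\alpha}^{-1}(D(f)),\psi_{f})$; verifying their $\Gamma$-compatibility with $\mathcal{A}(\Gamma)$, so that maximality forces their membership, is the technical heart of the proof. A secondary and more routine point is checking that the resulting identifications respect the restriction maps and glue coherently on overlaps, which I expect to follow formally from the canonical nature of the localization isomorphisms.
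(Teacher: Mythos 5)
Your proposal is correct, and its overall strategy coincides with the paper's: exploit the admissibility equality on the charts and glue. But your write-up is genuinely more complete in one respect, and the difference is worth recording. The paper dismisses your first assertion (that every extension is a scheme) with the single sentence that it is ``evident,'' and its proof of the second assertion defines $\phi(t)\in\Gamma(\mathcal{G},U)$ by gluing the sections $t\mid_{U_{\alpha}}$, tacitly assuming that these, read as sections of $\mathcal{G}$, agree on overlaps. Your localization-chart lemma is exactly what is needed to justify both points: maximality of $\mathcal{A}(\Gamma)$ forces the charts $\left(\varphi_{\alpha}^{-1}(D(f)),\psi_{f}\right)$ into the atlas, so admissibility pins down $\mathcal{F}$ on a base of each $U_{\alpha}$ and identifies $\varphi_{\alpha\ast}\mathcal{F}\mid_{U_{\alpha}}$ with $\widetilde{A_{\alpha}}$; this is what makes the local models affine schemes, and it is also what makes the identifications $\Gamma(\mathcal{F},U_{\gamma})=A_{\gamma}=\Gamma(\mathcal{G},U_{\gamma})$ compatible under passage to smaller charts, so that the paper's gluing of sections (equivalently, your gluing of the local isomorphisms $\theta_{\alpha}$) is legitimate. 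One caveat applies equally to your argument and to the paper's: Definition 3.1 as literally stated constrains only the values $\varphi_{\alpha\ast}\mathcal{F}\mid_{U_{\alpha}}\left(SpecA_{\alpha}\right)=A_{\alpha}$ and says nothing about restriction maps, so both proofs must read the definition as requiring that these equalities commute with restriction, i.e.\ that the induced map $A_{\alpha}\rightarrow (A_{\alpha})_{f}$ is the canonical localization map; you flag this explicitly as a residual point to check, which is the honest position, whereas the paper passes over it in silence. In short: same decomposition and same mechanism, but your version supplies the key step (localization charts lie in the complete atlas) that the paper's proof uses without stating.
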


\begin{proof}
Let $\mathcal{A}$ be an admissible affine structure on a topological space $X$.
It is evident that each extension of $\mathcal{A}$ on $X$ is a scheme.

Now fixed any extensions $\left( X,\mathcal{F}\right) $ and $
\left( X,\mathcal{G}\right) $  of $\mathcal{A}$ on $X$.
We prove $\mathcal{F}\cong\mathcal{G}.$

In deed, let $U_{\alpha }$ be an open subset of $X$ contained in
$\mathcal{A}$. From the assumption we have $$\Gamma \left(
\mathcal{F},U_{\alpha }\right) =\Gamma \left( \mathcal{G},
U_{\alpha }\right) .$$

Take any open subset $U$ of $X$. We have $$U=\bigcup_{\alpha
}U_{\alpha }$$ with $U_{\alpha }\in \mathcal{A}$.
Define a map $$\phi:\Gamma \left(
\mathcal{F},U\right) =\Gamma \left( \mathcal{G},
U\right),\\ t\mapsto \phi (t)$$
where $\phi (t)\in \Gamma \left( \mathcal{G},
U\right)$ is the section on $U$ determined by $$t\mid _{U_{\alpha}}=
\phi (t)\mid _{U_{\alpha}}.$$
Then $\phi$ is an isomorphism for every open subset $U$ of $X$.

By $\phi$
we obtain an
isomorphism $\mathcal{F}_{x}\cong \mathcal{G}_{x}$ at every $x\in X$ and
hence $\mathcal{F}\cong\mathcal{G}$ holds.
\end{proof}

\begin{corollary}
For affine structures, there are the following statements:
\begin{enumerate}
\item {Let }$\left( X,\mathcal{F}\right) $ {be an
extension of the affine }$\Gamma -${structure }$\mathcal{A}\left(
\Gamma \right) $ {on a space }$X.$ {Then we have}
$$
\left( U,\mathcal{F}\mid _{U}\right) \cong \left( SpecA,\widetilde{A}%
\right) \text{ {and }}\mathcal{F}\mid _{U}=\varphi _{\ast }^{-1}%
\widetilde{A}
$$
{for every affine chart }$\left( U,\varphi \right) \in ${$\mathcal{%
A}$}$\left( \Gamma \right) $ {with }$\varphi \left( U\right)
=Spec\left( A\right) ,${ where }$A$
{is a commutative ring contained in }$%
\Gamma .$

\item {An affine structure }$\mathcal{A}${
on a space }$X${ is admissible if and only if
}$\mathcal{A}${ can be
extended to be a sheaf }$\mathcal{F}${ on }$X$ {such that }$%
\left( X,\mathcal{F}\right) $ {is a locally ringed space.}
\end{enumerate}
\end{corollary}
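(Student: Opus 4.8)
The plan is to establish part (1) first and then obtain part (2) almost formally from it together with Definition 3.1 and Proposition 3.2. For part (1) the guiding idea is to promote the defining equality of an extension --- which a priori only constrains the ring of global sections on each affine chart --- into a genuine identification of sheaves $\varphi_{\ast}(\mathcal{F}\mid_{U})=\widetilde{A}$ on $Spec A$. Once this sheaf-level statement is secured, the homeomorphism $\varphi\colon U\to Spec A$ transports it to the claimed isomorphism $(U,\mathcal{F}\mid_{U})\cong(Spec A,\widetilde{A})$, and inverting the push-forward gives $\mathcal{F}\mid_{U}=\varphi_{\ast}^{-1}\widetilde{A}$.

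For the key steps of part (1): fix an affine chart $(U,\varphi)\in\mathcal{A}(\Gamma)$ with $\varphi(U)=Spec A$. First, I would note (Remark 2.3) that the charts of the atlas form a base for the topology of $X$, and that by condition 4 of Definition 2.1 (closure under localization) together with the overlap condition 3 of Definition 2.2, every distinguished open $D(f)\cong Spec(A_{f})$ inside $Spec A$ is realized as $\varphi(W)$ for a chart $W\subseteq U$ lying in $\mathcal{A}(\Gamma)$. Second, I would apply the extension property of Definition 3.1 to each such chart $W$ to get $\varphi_{\ast}(\mathcal{F}\mid_{U})(D(f))=A_{f}=\widetilde{A}(D(f))$ for every $f\in A$. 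Third, since the $D(f)$ form a base for the Zariski topology and both $\varphi_{\ast}(\mathcal{F}\mid_{U})$ and $\widetilde{A}$ are sheaves that agree on this base, a base-to-sheaf gluing argument identifies the two sheaves.

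I expect the third step to be the main obstacle: one must check not merely that the sections agree on each $D(f)$, but that the restriction maps of $\varphi_{\ast}(\mathcal{F}\mid_{U})$ along inclusions $D(f)\supseteq D(g)$ coincide with the localization maps $A_{f}\to A_{g}$ of $\widetilde{A}$. This compatibility is exactly what is encoded by the induced localization isomorphisms lying in $\Gamma$ (condition 3 of Definition 2.2), and one then invokes the sheaf axiom to pass from agreement on the base to an isomorphism of full sheaves. Verifying that sufficiently many distinguished opens are actually charts --- so that the base is genuinely covered --- is the delicate point, and it rests on the localization closure of the pseudogroup.

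For part (2), the forward implication is immediate: if $\mathcal{A}$ is admissible then Definition 3.1 furnishes a locally ringed space extension $(X,\mathcal{F})$, and this $\mathcal{F}$ is precisely the required sheaf. For the converse I would take a sheaf $\mathcal{F}$ extending $\mathcal{A}$ with $(X,\mathcal{F})$ locally ringed; unwinding what ``extends'' means recovers the defining equality $\varphi_{\ast}\mathcal{F}\mid_{U_{\alpha}}(Spec A_{\alpha})=A_{\alpha}$ on each chart, so $(X,\mathcal{F})$ is an extension and $\mathcal{A}$ is admissible. I would close by citing part (1) (or Proposition 3.2) to record that such an extension is covered by the affine schemes $(Spec A,\widetilde{A})$ and is therefore a scheme, thereby sharpening ``locally ringed space'' to ``scheme'' and matching the surrounding framework.
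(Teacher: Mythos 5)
Your reading of the statement is more demanding than the paper's own treatment: the paper disposes of both parts in a single line, citing Definition 3.1 and the preceding proposition (that all extensions of an admissible structure are schemes, isomorphic to one another). That one-line proof only makes sense if one reads the defining equality of an extension as already being a sheaf-level identification $\varphi_{\alpha\ast}\mathcal{F}\mid_{U_{\alpha}}=\widetilde{A_{\alpha}}$ on each chart, in which case part (1) holds by definition and part (2) is a restatement of Definition 3.1. You instead adopt the literal reading, on which only the ring of sections over each whole chart is pinned down, and you try to upgrade it. Two of your steps are sound and are genuinely absent from the paper: (a) every distinguished open $D(f)\subseteq Spec A$ is the image of a chart of the complete atlas, by closure of $\Gamma$ under localization (conditions 3--4 of Definition 2.1) together with maximality of $\mathcal{A}\left(\Gamma\right)$; and (b) applying the extension property to those charts gives $\varphi_{\ast}(\mathcal{F}\mid_{U})(D(f))=A_{f}$ for every $f\in A$. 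Your part (2) is handled exactly as the paper handles it and is fine.

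The gap is precisely the step you yourself flag as the main obstacle, and your proposed fix does not close it. Condition 3 of Definition 2.2 constrains the \emph{atlas}: it says that transition homeomorphisms between overlapping charts induce ring isomorphisms lying in $\Gamma$. It says nothing about the sheaf $\mathcal{F}$, whose only interaction with the atlas, on the literal reading of Definition 3.1, is the chartwise equality of section rings. Consequently nothing forces the restriction homomorphism from $\mathcal{F}(U)=A$ to $\mathcal{F}\left(\varphi^{-1}(D(f))\right)=A_{f}$ to be the canonical localization map $A\rightarrow A_{f}$; a priori it is an arbitrary ring homomorphism compatible with the sheaf axioms. Since identifying two sheaves from their behaviour on a base requires agreement of the restriction maps on that base, not merely of the section rings, the base-to-sheaf gluing you invoke cannot get started, and the identification $\varphi_{\ast}(\mathcal{F}\mid_{U})=\widetilde{A}$ is not established; the claimed equality $\mathcal{F}\mid_{U}=\varphi_{\ast}^{-1}\widetilde{A}$ is even stronger. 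To repair this you must either prove directly that the sheaf axioms plus locality of the stalks force the restrictions to be the localization maps (this is not a purely formal fact, and it is the real mathematical content being asserted), or do what the paper implicitly does: interpret Definition 3.1 as requiring an extension to restrict to $\widetilde{A_{\alpha}}$ \emph{as a sheaf} on each chart, after which both parts of the corollary are indeed immediate and your elaborate argument is unnecessary.
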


\begin{proof}
It is immediate from Definition 3.1 and Proposition 3.1.
\end{proof}

\section{Canonical Affine Structures}

In this section it will be seen that for a given scheme there
can be many different admissible affine structures on the underlying
topological space of the scheme. That is, a given scheme can have many associate schemes. All associate schemes of a given scheme are isomorphic as schemes but have different affine structures.

\subsection{Canonical Pseudogroups of Affine Transformations}

To start with, let's  consider an example.

\begin{example}
\textbf{(Different Affine Structures)} {Let} $k$ {be a field.}
\begin{enumerate}
\item {Put} $$\Gamma _{1}=\{\text{{the
identity }}1_{k}:k\rightarrow k\};$$
\begin{equation*}
\begin{array}{c}
\Gamma _{2}=\{\text{{the identity }}1_{k}:k\rightarrow k\}\bigcup \{\text{{a field isomorphism
}}\sigma
:k\rightarrow k^{\prime }\}\\

 \bigcup \{\text{{the inverse }}\sigma
^{-1}:k^{\prime }\rightarrow
k\}.\\
\end{array}
\end{equation*}
{Then }$\Gamma _{1}${ and }$\Gamma _{2}${ are both
pseudogroups of\ affine transformations.}

\item Let
$$\mathcal{A}\left( \Gamma _{1}\right) =\{\left( U,\varphi
\right) \};$$ $$ \mathcal{A}\left( \Gamma _{2}\right)
=\{\left( U,\varphi \right) ,\left( V,\eta \right) \},$$ {where
}$$U=V=Spec(k),\text{ }\varphi \left( U\right) =Spec(k),\text{ and }\eta \left( V\right) =Spec(k^{\prime }).$$
{Then }$Spec(k)${ is an extension of the affine structure }$%
\mathcal{A}\left( \Gamma _{1}\right) $.
{In general, it is not true that
}$Speck${ is an extension of }$\mathcal{A}\left( \Gamma
_{2}\right) .$
{For example, let }$\sqrt[3]{2},\xi ,\overline{\xi }$ {be
the roots of the equation }$X^{3}-2=0$ {in }$\mathbb{C}.$
{Consider }$k=\mathbb{Q}\left( \sqrt[3]{2}\right); k^{\prime } =\mathbb{Q}\left( \xi \right) .$
\end{enumerate}
\end{example}

\begin{definition}
{Let }$\left( X,\mathcal{O}_{X}\right) $ be a scheme.
{Denote by $\Gamma_{ 0}$ (respectively, $\Gamma^{max}$)
the union of
the set of some (respectively, all) identities of commutative rings}
$$id_{A_{\alpha }}:A_{\alpha }\rightarrow A_{\alpha }$$ {and
the set of some (respectively, all) isomorphisms of commutative rings}
$$\sigma _{\alpha \beta }:\left( A_{\alpha }\right) _{f_{\alpha
}}\rightarrow \left( A_{\beta }\right) _{f_{\beta }},$$
{satisfying the conditions 1-2:}
\begin{enumerate}
\item {Each} $ A_{\alpha },A_{\beta },A_{\gamma }\in Comm$
{are commutative rings such that there are affine open subsets}
$$U_{\alpha },U_{\beta }, { and \, }U_{\gamma }\subseteq U_{\alpha
}\cap U_{\beta }$${of} $X$ {satisfying the conditions}
$$\varphi _{\alpha }\left( U_{\alpha }\right) =SpecA_{\alpha
}, \varphi _{\beta }\left( U_{\beta }\right) =SpecA_{\beta
},{ and \,} \varphi _{\gamma }\left( U_{\gamma }\right)
=SpecA_{\gamma }.$$

\item {Each} $\sigma _{\alpha \beta }:\left( A_{\alpha
}\right) _{f_{\alpha }}\rightarrow \left( A_{\beta }\right)
_{f_{\beta }}${is induced from the homeomorphism}
$$\varphi _{\alpha }\circ \varphi _{\beta }^{-1}\mid _{U_{\gamma}}:
\varphi _{\beta }({U_{\gamma}})\rightarrow\varphi _{\alpha}({U_{\gamma}})$$
{such that}
$$
\varphi _{\alpha }\left( U_{\gamma }\right) \cong Spec\left( A_{\alpha
}\right) _{f_{\alpha }}\text{{ and }}\varphi _{\beta }\left( U_{\gamma
}\right) \cong Spec\left( A_{\beta }\right) _{f_{\beta }}
$$
{hold for some} $f_{\alpha }\in A_{\alpha }$ {and
}$f_{\beta }\in A_{\beta }.$
\end{enumerate}

{Then the pseudogroup generated by
$\Gamma_{0}$ in $\mathfrak{Comm}$, denoted by
$\Gamma_{X,\mathcal{O}_{X}}$, which is the smallest pseudogroup
containing  $\Gamma_{0}$ in $\mathfrak{Comm}$, is called \textbf{a
pseudogroup of affine transformations} in $\left(
X,\mathcal{O}_{X}\right) $.}

{The pseudogroup generated by
$\Gamma^{max}$ in $\mathfrak{Comm}$, denoted by
$\Gamma^{max}_{X,\mathcal{O}_{X}}$, is called \textbf{the maximal
pseudogroup of affine transformations} in $\left(
X,\mathcal{O}_{X}\right) $.}

{For any given $\Gamma_{X,\mathcal{O}_{X}}$, define} $$\mathcal{A}^{\ast }
\left( \Gamma_{ X,\mathcal{O}_{X}}
\right) =\{\left( U_{\alpha },\varphi _{\alpha }\right) :\varphi _{\alpha
}\left( U_{\alpha }\right) =SpecA_{\alpha } \text{ and }A_{\alpha }\in
\Gamma_{ X,\mathcal{O}_{X}} \}$${where each $U_{\alpha}$ is an
affine open subset in the scheme $X$.}
\end{definition}

\begin{definition}
{Let }$\left( X,\mathcal{O}_{X}\right) $ be a scheme.
{Given such a pseudogroup $\Gamma_{
X,\mathcal{O}_{X}}$ in $\left(
X,\mathcal{O}_{X}\right)$. Suppose that $\mathcal{A}^{\ast }\left(
\Gamma_{ X,\mathcal{O}_{X}}
\right)$ is an affine $\Gamma_{ X,\mathcal{O}_{X}}
-$atlas on the space $X$.} {Then $\Gamma_{
X,\mathcal{O}_{X}}$ is said to be a \textbf{canonical pseudogroup of affine transformations}
in
the scheme $\left( X,\mathcal{O}_{X}\right)$ and $\mathcal{A}^{\ast }\left(
\Gamma_{ X,\mathcal{O}_{X}}
\right)$ is said to be an \textbf{affine atlas} in the scheme $(X,\mathcal{O}_{X})$.}
\end{definition}

It is immediate that $\Gamma_{X,\mathcal{O}_{X}}$ is a sub-pseudogroup of $\Gamma^{max}_{X,
\mathcal{O}_{X}}$.
There can be many canonical pseudogroups of affine transformations in the scheme
$(X,\mathcal{O}_{X})$. By Zorn's Lemma it is seen that
$\Gamma^{max}_{X,\mathcal{O}_{X}}$ is maximal among these pseudogroups.

Take an example. Let $X=Spec(\mathbb{Z})$ and $Y$ be the disjoint union of $X$.
Then there are three canonical pseudogroups of affine transformations in the
scheme $Y$,
which are generated respectively by $\mathbb{Z}$ and its localisations, by
$\mathbb{Z}\oplus\mathbb{Z}$
and its localisations, and by $\mathbb{Z}$ and $\mathbb{Z}\oplus\mathbb{Z}$
and their localisations.

\subsection{Canonical Affine Structures}

In general, the underlying space of a scheme can have many affine structures on it.

\begin{definition}
{Let $\Gamma$ be a canonical
pseudogroup of affine transformations in a scheme} $\left(
X,\mathcal{O}_{X}\right) .$

{An affine $\Gamma-$atlas $\mathcal{A}$ on the space $X$ is
said to be \textbf{a canonical affine structure} in the scheme $(X,\mathcal{O}_{X})$
if $\mathcal{A}$
is the affine $\Gamma-$structure on $X$ determined by the
affine $\Gamma-$atlas $\mathcal{A}^{\ast }\left( \Gamma \right)$.}

{An affine $\Gamma-$atlas $\mathcal{A}$ on the space $X$
is said to be \textbf{a relative canonical affine structure} in the scheme
$(X,\mathcal{O}_{X})$ if
$\mathcal{A}$ is maximal among all the
affine $\Gamma-$atlases in $(X,\mathcal{O}_{X})$ which contain the
affine $\Gamma-$atlas $\mathcal{A}^{\ast }\left( \Gamma \right)$ and are
$\Gamma-$compatible.}

{A scheme is said to have \textbf{a unique (}respectively,
\textbf{relative}\textbf{)
canonical affine structure} if there exists only one (respectively, relative)
canonical affine
structure in it.}
\end{definition}

\begin{proposition}
{Let }$\Gamma ${ be the maximal pseudogroup of affine
transformations in a scheme} $\left( X,\mathcal{O}_{X}\right) .$ {Then }$%
\mathcal{A}^{\ast }\left( \Gamma \right) ${ is a relative canonical affine $\Gamma
-$structure in} $\left(
X,\mathcal{O}_{X}\right) .$
\end{proposition}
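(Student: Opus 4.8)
The plan is to unwind the definition of \emph{relative canonical affine structure} (Definition 4.2) in the case where $\Gamma=\Gamma^{max}_{X,\mathcal{O}_{X}}$ is the maximal pseudogroup, exploiting the fact that, because $\Gamma$ is generated by \emph{all} identities and \emph{all} transition isomorphisms arising from the affine opens of the scheme, the atlas $\mathcal{A}^{\ast}\left(\Gamma\right)$ is already as large as it can be. Concretely, I would first check that $\Gamma$ is a canonical pseudogroup, i.e. that $\mathcal{A}^{\ast}\left(\Gamma\right)$ is genuinely an affine $\Gamma$-atlas in the sense of Definition 2.2, and then show that no affine $\Gamma$-atlas \emph{in the scheme} can strictly enlarge it.

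For the first step I would verify conditions 1--3 of Definition 2.2. Condition 1 holds by the very construction of $\mathcal{A}^{\ast}$. Condition 2 holds because $X$, being a scheme, is covered by affine open subsets, and every affine open $U$ with coordinate ring $A=\mathcal{O}_{X}\left(U\right)$ contributes a chart to $\mathcal{A}^{\ast}\left(\Gamma\right)$, since $A$ lies in $\Gamma$ by the ``all identities'' clause in Definition 4.1. Condition 3 is the only part requiring real scheme-theoretic input: given two overlapping charts with $\varphi_{\alpha}\left(U_{\alpha}\right)=SpecA_{\alpha}$ and $\varphi_{\beta}\left(U_{\beta}\right)=SpecA_{\beta}$, I would invoke the standard fact that $U_{\alpha}\cap U_{\beta}$ is covered by opens simultaneously distinguished in both charts, of the form $Spec\left(A_{\alpha}\right)_{f_{\alpha}}\cong Spec\left(A_{\beta}\right)_{f_{\beta}}$; the induced isomorphism $\left(A_{\alpha}\right)_{f_{\alpha}}\to\left(A_{\beta}\right)_{f_{\beta}}$ is precisely one of the generators $\sigma_{\alpha\beta}$ of $\Gamma$, so the required refining chart already belongs to $\mathcal{A}^{\ast}\left(\Gamma\right)$. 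The same computation shows that $\mathcal{A}^{\ast}\left(\Gamma\right)$ is $\Gamma$-compatible with itself in the sense of Definition 2.4.

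For maximality I would argue as follows. Let $\mathcal{B}$ be any affine $\Gamma$-atlas in $\left(X,\mathcal{O}_{X}\right)$ that contains $\mathcal{A}^{\ast}\left(\Gamma\right)$ and is $\Gamma$-compatible with it. By the meaning of an atlas \emph{in the scheme}, every chart $\left(U,\varphi\right)\in\mathcal{B}$ has an affine open $U$ with $\varphi\left(U\right)=SpecA$ and $A\in\Gamma$; but these are exactly the defining conditions for membership in $\mathcal{A}^{\ast}\left(\Gamma\right)$. Hence $\mathcal{B}\subseteq\mathcal{A}^{\ast}\left(\Gamma\right)$, and together with the reverse inclusion this forces $\mathcal{B}=\mathcal{A}^{\ast}\left(\Gamma\right)$. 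Thus $\mathcal{A}^{\ast}\left(\Gamma\right)$ admits no proper enlargement within the prescribed class, so it is a relative canonical affine $\Gamma$-structure in $\left(X,\mathcal{O}_{X}\right)$.

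The step I expect to be the genuine obstacle is Condition 3 above: one must be certain that the transition isomorphisms coming from overlaps of \emph{arbitrary} affine charts really factor through localizations of the shape appearing in the generating family of $\Gamma$, and not through some wider class of subrings. This reduces to the lemma that the intersection of two affine opens of a scheme is covered by opens distinguished in both, which is precisely where the locally-ringed-space hypothesis is used; once this is in hand, the remaining inclusions are bookkeeping.
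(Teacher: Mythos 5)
Your proof is correct and shares the paper's overall plan (first show $\mathcal{A}^{\ast}\left(\Gamma\right)$ is a $\Gamma$-atlas, then show it cannot be properly enlarged), but the decisive step is executed by a genuinely different argument. You read off the membership $A\in\Gamma$ for every affine open $U$ directly from the ``all identities'' clause in the construction of $\Gamma^{max}$ in Definition 4.1, so that for the maximal pseudogroup the covering condition is essentially definitional. The paper instead argues by contradiction from the abstract maximality of $\Gamma$ among pseudogroups of affine transformations in $\left(X,\mathcal{O}_{X}\right)$: fixing an affine open $U\ni x$ with ring $A$, it hypothesizes that $A_{f}\notin\Gamma$ for every $f\in A$, observes that adjoining $id_{A}$ to $\Gamma$ would then produce a strictly larger such pseudogroup, contradicting maximality, and only afterwards concludes that $A$ itself lies in $\Gamma$. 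Your route is shorter and cleaner; the paper's route isolates exactly what maximality buys and would still apply if $\Gamma^{max}$ were characterized abstractly (as the Zorn's Lemma maximal element among canonical pseudogroups) rather than by its explicit generating set. You are also more careful on two points the paper passes over: you verify condition 3 of Definition 2.2 via the standard lemma that the intersection of two affine opens of a scheme is covered by opens simultaneously distinguished in both charts, with the induced transition isomorphisms being precisely generators $\sigma_{\alpha\beta}$ of $\Gamma^{max}$ (the paper merely asserts that $\Gamma$-compatibility is clear), and you spell out the final maximality-among-atlases-in-the-scheme step, which the paper leaves implicit once it has shown that every affine open of the scheme already contributes a chart to $\mathcal{A}^{\ast}\left(\Gamma\right)$. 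Neither extra verification changes the substance, but both fill real gaps in the paper's exposition.
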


\begin{proof}
Prove $\mathcal{A}^{\ast }\left( \Gamma \right) $ is a $\Gamma -$atlas
of the space $X.$ In fact, it is clear that  $\mathcal{A}^{\ast }\left( \Gamma \right) $ is $\Gamma -$compatible. Hence, it suffices to prove that
$\mathcal{A}^{\ast }\left( \Gamma \right) $ affords us a base for the topology on
the space $X.$

Fixed any point $x\in X$. Take any affine open subset $U\ni x$ in $\left( X,%
\mathcal{O}_{X}\right) $ such that there is an isomorphism $$\left( \varphi ,
\widetilde{\varphi }%
\right) :\left( U,\mathcal{O}_{X}\mid _{U}\right) \cong \left( SpecA,%
\widetilde{A}\right) $$ where $A\in \mathfrak{Comm}.$

There is some $f\in A$ such that $A_{f}$ is contained in $\Gamma$.
Hypothesize that $A_{f}\not\in \Gamma $ holds for any $f\in A.$ Then
$\{\left( U,\varphi \right) \}$and $\mathcal{A}^{\ast }\left( \Gamma \right) $ are
$\Gamma -$compatible; it follows that $\{id_{A}\}\bigcup\Gamma$ is a
pseudogroup of affine transformations in $(X,\mathcal{O}_{X})$,
where $id_{A}:A\rightarrow A$ is the identity map; hence, we have
$$\Gamma \subsetneqq \{id_{A}\}\bigcup\Gamma,$$ which will be in
contradiction with the assumption.

Now take any $f \in A$ such that $A_{f}\in \Gamma $. Let $SpecA$ be
irreducible without loss of generality. We have $$Spec(A_{f})\cong
D(f)\subseteq SpecA.$$ Then $\{\left(
U,\varphi \right) \}$ and $\mathcal{A}^{\ast }\left( \Gamma \right) $ are $\Gamma -$%
compatible.

As $U$ is an affine open subset in $X$, we have $\left(
U,\varphi \right)\in \mathcal{A}^{\ast }\left( \Gamma \right) $; as $\Gamma $ is
maximal in $\left( X,\mathcal{O}_{X}\right) ,$ it is seen that $A$ is
contained in $\Gamma .$

This proves that for any $x\in X$ there is an affine chart $\left(
U,\varphi \right) \in \mathcal{A}^{\ast }\left( \Gamma \right)$ such that
$x\in U$.
\end{proof}

\begin{remark}
Let $\left( X,\mathcal{O}_{X}\right) $ be a scheme. Then
$\mathcal{A}^{\ast }\left( \Gamma_{X,\mathcal{O}_{X}} \right)
\subseteqq \mathcal{A}^{\ast }\left( \Gamma^{m}_{X,\mathcal{O}_{X}}
\right).$
In particular, each relative canonical affine
$\Gamma_{X,\mathcal{O}_{X}}-$structure
in $(X,\mathcal{O}_{X})$ is contained in $\mathcal{A}^{\ast }\left( \Gamma^{m}_{X,
\mathcal{O}_{X}} \right)$.
In general, there can be different relative canonical affine structures in a scheme.
\end{remark}

Furthermore, we have the following conclusions.

\begin{proposition}
Let $\left( X,\mathcal{O}_{X}\right)$ be a scheme. There are the following statements.
\begin{enumerate}
\item {Let $\Gamma$ be a canonical pseudogroup
of affine transformations in $\left( X,\mathcal{O}_{X}\right)$. Then there is a
unique (respectively, relative)
canonical affine $\Gamma-$structure in $\left( X,\mathcal{O}_{X}\right)$.}

\quad Furthermore, given any affine open subset $U$ in $\left( X,\mathcal{O}_{X}\right)$. Then
$U$ is contained in the canonical affine $\Gamma-$structure in $\left( X,\mathcal{O}_{X}\right)$
if and only if $U$ is contained in the relative canonical affine $\Gamma-$structure
in $\left( X,\mathcal{O}_{X}\right)$.

\item {The scheme $\left( X,\mathcal{O}_{X}\right)$ has a unique affine structure
if and only if $\left( X,\mathcal{O}_{X}\right)$
has a unique relative affine structure.}
\end{enumerate}
\end{proposition}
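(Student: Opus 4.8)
The plan is to deduce both statements from Proposition 2.1 once the relative canonical structure has been described explicitly. Throughout, fix a canonical pseudogroup $\Gamma=\Gamma_{X,\mathcal{O}_{X}}$ and let $\mathcal{A}_{m}$ denote the complete affine $\Gamma$-atlas determined by $\mathcal{A}^{\ast}(\Gamma)$. Since $\Gamma$ is canonical, $\mathcal{A}^{\ast}(\Gamma)$ is an affine $\Gamma$-atlas (Definition 4.3), so $\mathcal{A}_{m}$ exists and is unique by Proposition 2.1. By Definition 4.4 this $\mathcal{A}_{m}$ is \emph{verbatim} the canonical affine $\Gamma$-structure; hence its uniqueness is just the uniqueness clause of Proposition 2.1, and the canonical case in the first half of (1) needs nothing more.

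For the relative case I would introduce the sub-collection $\mathcal{A}_{m}^{s}=\{(U,\varphi)\in\mathcal{A}_{m}: U \text{ is an affine open subscheme of } (X,\mathcal{O}_{X})\}$ and argue that it is precisely the relative canonical affine $\Gamma$-structure. First, $\mathcal{A}^{\ast}(\Gamma)\subseteq\mathcal{A}_{m}^{s}$, because every chart of $\mathcal{A}^{\ast}(\Gamma)$ has affine-open-subscheme domain by the very definition of $\mathcal{A}^{\ast}(\Gamma)$ in Definition 4.2; in particular $\mathcal{A}_{m}^{s}$ covers $X$. Next, $\mathcal{A}_{m}^{s}$ is $\Gamma$-compatible because it is a sub-collection of the atlas $\mathcal{A}_{m}$. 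The substantive point, and the main obstacle, is to verify that $\mathcal{A}_{m}^{s}$ is itself an affine $\Gamma$-atlas, i.e. that it satisfies condition (3) of Definition 2.2 while staying inside the scheme-affine charts: given two charts of $\mathcal{A}_{m}^{s}$ with $U_{i}\cap U_{j}\neq\varnothing$, I would take the intermediate chart to be a distinguished open $W=D(f_{i})\subseteq U_{i}\cap U_{j}$, which is again an affine open subscheme, whose coordinate ring $(A_{i})_{f_{i}}$ lies in $\Gamma$ by condition (4) of the pseudogroup and whose transition isomorphism lies in $\Gamma$ by condition (5) together with the construction of $\Gamma_{X,\mathcal{O}_{X}}$ in Definition 4.2. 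This is the one place where the pseudogroup axioms and the distinguished-open base of a scheme must be used in tandem.

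Maximality and uniqueness of the relative structure then follow together. If $\mathcal{B}$ is any affine $\Gamma$-atlas in $(X,\mathcal{O}_{X})$ that contains $\mathcal{A}^{\ast}(\Gamma)$ and is $\Gamma$-compatible, then in particular $\mathcal{B}$ is $\Gamma$-compatible with the base $\mathcal{A}^{\ast}(\Gamma)$, so Proposition 2.1 forces $\mathcal{B}\subseteq\mathcal{A}_{m}$; as all charts of $\mathcal{B}$ have affine-open-subscheme domains, $\mathcal{B}\subseteq\mathcal{A}_{m}^{s}$. Thus $\mathcal{A}_{m}^{s}$ is the unique maximal such atlas, i.e. the unique relative canonical affine $\Gamma$-structure, which finishes the first half of (1). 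The ``furthermore'' clause is now tautological: for an affine open subscheme $U$, the chart $(U,\varphi)$ lies in the canonical structure $\mathcal{A}_{m}$ if and only if it lies in $\mathcal{A}_{m}^{s}$, which is exactly the asserted equivalence between the canonical and the relative canonical structures.

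Finally, for statement (2) I would record this correspondence as a bijection between the set of canonical affine structures of $(X,\mathcal{O}_{X})$ (one per canonical pseudogroup) and the set of relative canonical ones. Send a canonical structure $\mathcal{A}_{m}$ to $R(\mathcal{A}_{m}):=\mathcal{A}_{m}^{s}$, and a relative canonical structure $\mathcal{A}_{\mathrm{rel}}$ to $C(\mathcal{A}_{\mathrm{rel}})$, its completion supplied by Proposition 2.1. From $\mathcal{A}^{\ast}(\Gamma)\subseteq\mathcal{A}_{m}^{s}\subseteq\mathcal{A}_{m}$ one gets that the completion of $\mathcal{A}_{m}^{s}$ coincides with that of $\mathcal{A}^{\ast}(\Gamma)$, namely $\mathcal{A}_{m}$, so $C(R(\mathcal{A}_{m}))=\mathcal{A}_{m}$; the ``furthermore'' clause gives $R(C(\mathcal{A}_{\mathrm{rel}}))=\mathcal{A}_{\mathrm{rel}}$. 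Hence $R$ and $C$ are mutually inverse, and since a bijection sends singletons to singletons, $(X,\mathcal{O}_{X})$ has exactly one canonical affine structure if and only if it has exactly one relative canonical affine structure. I expect the routine parts to be the two inclusion/completion verifications, and the genuine content to be the atlas check flagged above.
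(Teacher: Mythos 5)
Your proposal hinges on the identification of the relative canonical affine $\Gamma$-structure with $\mathcal{A}_{m}^{s}=\{(U,\varphi)\in\mathcal{A}_{m}:U\text{ is an affine open subscheme of }(X,\mathcal{O}_{X})\}$, and the verification you yourself flag as the main obstacle does not go through as written. Membership of a chart $(U_{i},\varphi_{i})$ in $\mathcal{A}_{m}^{s}$ constrains only the \emph{domain} $U_{i}$, not the homeomorphism $\varphi_{i}$: the completion $\mathcal{A}_{m}$ produced by Proposition 2.1 via Zorn's Lemma can contain charts whose homeomorphism onto $SpecA_{i}$ bears no relation to the restricted sheaf $\mathcal{O}_{X}\mid_{U_{i}}$ (Example 4.1 is the paper's own illustration of this phenomenon: the chart $(V,\eta)$ with $\eta(V)=Spec(k^{\prime})$ sits over the affine open set $V=Spec(k)$ while $k^{\prime}\neq\Gamma(V,\mathcal{O}_{X})$). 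For such a chart, $\varphi_{i}^{-1}(D(f_{i}))$ is merely \emph{homeomorphic} to $Spec(A_{i})_{f_{i}}$; nothing makes it an affine open \emph{subscheme} of $(X,\mathcal{O}_{X})$, so your proposed witness chart need not lie in $\mathcal{A}_{m}^{s}$, and condition (3) of Definition 2.2 for $\mathcal{A}_{m}^{s}$ remains unestablished. (A related difficulty infects the appeal to pseudogroup axioms (4)--(5): unless $\varphi_{i}$ respects the scheme structure, the restriction $\varphi_{j}\circ\varphi_{i}^{-1}$ on your chosen $W$ is a bare homeomorphism of spectra, and there is no induced ring isomorphism to test for membership in $\Gamma$.) Since the ``furthermore'' clause of (1) and both identities $C(R(\mathcal{A}_{m}))=\mathcal{A}_{m}$ and $R(C(\mathcal{A}_{\mathrm{rel}}))=\mathcal{A}_{\mathrm{rel}}$ in your part (2) cite this identification, the gap propagates through the entire argument.

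For comparison, the paper never attempts your explicit description. It disposes of (1) with ``immediate from definition,'' which in context means rerunning the Zorn-plus-union argument of Proposition 2.1 inside the restricted class of affine $\Gamma$-atlases in the scheme (i.e., existence and uniqueness of a maximal element of that smaller poset), with no claim about how the relative structure sits inside $\mathcal{A}_{m}$; and it proves (2) by contradiction, pairing each relative canonical structure with the canonical structure of the \emph{same} pseudogroup and using (1) to transfer distinctness/incompatibility between the two kinds of structures. Your bijection $R$, $C$ is in spirit exactly that correspondence, so part (2) of your argument is salvageable if you index the correspondence by the canonical pseudogroup $\Gamma$ (canonical structure of $\Gamma$ $\leftrightarrow$ relative canonical structure of $\Gamma$) instead of routing it through the unproven equality $\mathcal{A}_{\mathrm{rel}}=\mathcal{A}_{m}^{s}$. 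If you do want the explicit description, you would have to restrict $\mathcal{A}_{m}^{s}$ to charts whose homeomorphisms are induced by scheme isomorphisms onto their spectra, and then separately prove that this smaller collection is still maximal among the atlases in the scheme; as defined, scheme-affineness of the domain alone is too weak a condition to run your distinguished-open argument.
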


\begin{proof}
$\emph{1}$. It is immediate from definition.

$\emph{2}$. Assume that $\left( X,\mathcal{O}_{X}\right)$ has a unique affine structure.
Hypothesize that $\mathcal{A}(\Gamma _{1})$
and $\mathcal{A}(\Gamma _{2})$ are two distinct relative canonical affine structures
in $\left( X,\mathcal{O}_{X}\right)$ together
with the canonical
pseudogroups $\Gamma_{1}$ and $\Gamma_{2}$ respectively.

From $\Gamma_{1}$ and $\Gamma_{2}$ we obtain two canonical affine structures
$\mathcal{B}(\Gamma _{1})$
and $\mathcal{B}(\Gamma _{2})$ in $(X,\mathcal{O}_{X})$. Then $\mathcal{B}(\Gamma _{1})$
and $\mathcal{B}(\Gamma _{2})$ are neither $\Gamma_{1}-$compatible nor
$\Gamma_{2}-$compatible. Otherwise, if they are $\Gamma_{1}-$compatible, by $(i)$
it will be seen that
$\mathcal{A}(\Gamma _{1})$
and $\mathcal{A}(\Gamma _{2})$ are $\Gamma_{1}-$compatible.

Hence, there are two distinct canonical affine structures in $\left( X,\mathcal{O}_{X}
\right)$,
which is in contradiction to the assumption.

Conversely, assume that $\left( X,\mathcal{O}_{X}\right)$ has a unique relative affine
structure.
If $(X,\mathcal{O}_{X})$ has two distinct canonical affine structures $\mathcal{B}
(\Gamma _{1})$
and $\mathcal{B}(\Gamma _{2})$, by (1) we will obtain two relative
canonical affine structures in $\left( X,\mathcal{O}_{X}\right)$ which are neither
$\Gamma_{1}-$compatible nor
$\Gamma_{2}-$compatible in virtue of the property of a base for the topology on $X$,
where there will be a contradiction.
\end{proof}

\subsection{Associate Schemes}

In the following it will be seen that a scheme can have many associate schemes.

\begin{proposition}
All (respectively,
relative) canonical affine structures in a scheme $\left( X,\mathcal{O}_{X}\right)$ are admissible; moreover,
their extensions are all isomorphic to $\left( X,\mathcal{O}_{X}\right)$ as schemes.
\end{proposition}

\begin{proof}
Let $\mathcal{A}^{\ast }\left( X,\mathcal{O}_{X}\right) $ be a (respectively, relative)
canonical
affine structure on $X.$ Take any $\left( U_{\alpha },\varphi _{\alpha
}\right) \in \mathcal{A}^{\ast }\left( X,\mathcal{O}_{X}\right) .$ There is
the isomorphism
\begin{equation*}
\left( \tau _{\alpha },\widetilde{\tau _{\alpha }}\right) :\left( U_{\alpha
},\mathcal{O}_{X}\mid _{U_{\alpha }}\right) \cong \left( SpecA_{\alpha },%
\widetilde{A_{\alpha }}\right)
\end{equation*}
where $$\varphi _{\alpha }\left( U_{\alpha }\right) =\tau _{\alpha }\left(
U_{\alpha }\right) ;$$ $$\widetilde{\tau _{\alpha }}\left( \widetilde{%
A_{\alpha }}\right) =\tau _{\alpha \ast }\mathcal{O}_{X}\mid _{U_{\alpha }}.$$

This proves that the scheme $(X,\mathcal{O}_{X})$ is at least an extension of
$\mathcal{A}^{\ast }\left( X,\mathcal{O}_{X}\right) $.
It follows that $\mathcal{A}^{\ast }\left( X,\mathcal{O}_{X}\right) $ is admissible.

Take any extension $(X,\mathcal{F})$ of $\mathcal{A}^{\ast }\left( X,\mathcal{O}_{X}\right) $.
By gluing sections, it is seen that $(X,\mathcal{F})$ and $\left( X,\mathcal{O}_{X}\right)$ are
isomorphic schemes.
\end{proof}

\begin{definition}
An \textbf{associate
scheme} of a given scheme $\left( X,\mathcal{O}_{X}\right) $ is an extension on the space $X$ of a
canonical affine structure or a relative canonical affine structure in $\left( X,\mathcal{O}%
_{X}\right) $.
\end{definition}

\begin{remark} By Proposition 4.3 we have the following statements:

$1.$ Every scheme has an associate scheme.
In particular, a scheme  is an associate scheme of itself.

$2.$ All associate schemes of a given scheme are
isomorphic as schemes.
\end{remark}

\section{Statements of the Main Theorems}

In the paper there are two main theorems that will be stated in the following.

\subsection{Definitions and Notations}

Let's fix notations and terminology.

For a topological space $X$,
put
\begin{itemize}
\item $\mathbb{A}\left( X\right)\triangleq$  \emph{the set of all admissible affine structures
on the space} $X$.
\end{itemize}
For a scheme $\left( X,\mathcal{O}_{X}\right)$, set
\begin{itemize}
\item $ \mathbb{A}_{0}\left(
X,\mathcal{O}_{X}\right) \triangleq$  \emph{the set of all the relative
canonical affine structures in the scheme} $\left( X,\mathcal{O}_{X}\right)$.
\end{itemize}

Likewise, we can define $ \mathbb{A}\left( X;k\right)$ and $
\mathbb{A}_{0}\left( X,\mathcal{O}_{X};k\right) $ for $k-$affine
structures.

\begin{definition}
{For two spaces $X$ and $Y$, we say
$$
\mathbb{A}\left( X\right) \subseteq\mathbb{A}\left( Y\right)
$$
if the below condition is satisfied:}
\begin{quotation}
Given any affine chart $\left( U_{\alpha},\varphi_{\alpha}\right) $
contained in an affine structure
$\mathcal{A}\left( X\right) $ belonging to $\mathbb{A}\left( X\right)$. There
is an affine chart $\left( V_{\alpha},\psi_{\alpha}\right) $ contained in an
affine structure $\mathcal{A}\left( Y\right) $ belonging to $\mathbb{A}%
\left( Y\right) $ such that $B_{\alpha}=A_{\alpha}$. Here
$
A_{\alpha},B_{\alpha}\in\mathfrak{Comm}$, $\varphi_{a}\left( U_{\alpha}\right)
=SpecA_{\alpha}$, and $ \psi_{\alpha
}\left( V_{\alpha}\right) =SpecB_{\alpha}.$
\end{quotation}
\end{definition}

\begin{definition}
For two spaces $X$ and $Y$, we say
$$
\mathbb{A}\left( X\right) =\mathbb{A}\left( Y\right)
$$
if there are relations
\begin{equation*}
\mathbb{A}\left( X\right) \subseteq\mathbb{A}\left( Y\right) \text{ and }%
\mathbb{A}\left( X\right) \supseteq\mathbb{A}\left( Y\right).
\end{equation*}
\end{definition}

Likewise, replacing admissible by relative canonical, we define $$
\mathbb{A}_{0}\left( X,\mathcal{O}_{X}\right) =\mathbb{A}_{0}\left( Y,
\mathcal{O}_{Y}\right) $$
for two schemes $\left( X,\mathcal{O}_{X}\right) $ and $\left( Y,\mathcal{O}%
_{Y}\right) .$

\begin{definition}
For two spaces $X$ and $Y$, we say $$ \mathbb{A}\left( X\right)
\trianglelefteq\mathbb{A}\left( Y\right) $$  if the below conditions 1-3 are
satisfied:
\begin{enumerate}
\item \textbf{(Local Isomorphism)} Given any affine chart $\left( U_{\alpha
},\varphi_{\alpha}\right) $ contained in an affine structure $\mathcal{A}
\left( X\right) $ belonging to $\mathbb{A}\left( X\right)$.

\quad Then there is an affine chart
$\left( V_{\alpha},\psi_{\alpha}\right) $ contained in an
affine structure $\mathcal{A}\left( Y\right) $ belonging to $\mathbb{A}
\left( Y\right) $ such that $A_{\alpha}$ and $B_{\alpha}$ are isomorphic
rings. Here $ A_{\alpha},B_{\alpha}\in\mathfrak{Comm}$,
$\varphi_{a}\left( U_{\alpha}\right) =SpecA_{\alpha}$, and $\psi_{\alpha
}\left( V_{\alpha}\right) =SpecB_{\alpha}. $

\item \textbf{(Covering)} Let $\{\left( U_{\alpha },\varphi_{\alpha}\right)
\}_{\alpha\in\Gamma}$ be a family of affine charts $\left( U_{\alpha
},\varphi_{\alpha}\right) $ contained in some affine structures
$\mathcal{A}(\Gamma_{\alpha})$ belonging to $\mathbb{A}\left( X\right)$
such that $\varphi_{a}\left( U_{\alpha}\right)$ $ =SpecA_{\alpha}$ and
$\bigcup_{\alpha\in\Gamma}U_{\alpha }\supseteq X$.

\quad Then
$\bigcup_{i,\alpha}V_{i,\alpha }\supseteq Y$ holds, where $V_{i,\alpha }$
runs through all the affine charts $(V_{i,\alpha },\psi_{i,\alpha})$ contained in
any affine structures $\mathcal{A}(\Gamma_{i,\alpha})$ belonging to $\mathbb{A}%
\left( Y\right) $ such that $B_{i,\alpha }\cong A_{\alpha}$ and
$\psi_{i,\alpha}(B_{i,\alpha })=SpecB_{i,\alpha }$.

\item \textbf{(Filtering)} Let $\left( U_{\alpha },\varphi_{\alpha}\right)$ and
$(U_{\beta},\varphi_{\beta})$ be two affine charts contained in some affine
structures $\mathcal{A}(\Gamma_{\alpha})$ and
$\mathcal{A}(\Gamma_{\beta})$ belonging to $\mathbb{A}\left( X\right)$
respectively. Given any $x_{\alpha}\in SpecA_{\alpha}$ and $x_{\beta}\in
SpecA_{\beta}$ with
$\varphi^{-1}_{\alpha}(x_{\alpha})=\varphi^{-1}_{\beta}(x_{\beta})$, where
$\varphi_{\alpha}\left( U_{\alpha}\right) =SpecA_{\alpha}$ and
$\varphi_{\beta}\left( U_{\beta}\right) =SpecA_{\beta}$.

\quad Then there exist
affine charts $\left( V_{\alpha },\psi_{\alpha}\right)$ and
$(V_{\beta},\psi_{\beta})$ respectively contained in some affine structures
$\mathcal{A}^{\prime}(\Gamma^{\prime}_{\alpha})$ and
$\mathcal{A}^{\prime}(\Gamma^{\prime}_{\beta})$ belonging to
$\mathbb{A}\left( Y\right)$ such that
$\psi^{-1}_{\alpha}\circ\sigma_{\alpha}(x_{\alpha})=\psi^{-1}_{\beta}\circ
\sigma_{\beta}(x_{\beta})$
holds and that there are ring isomorphisms $\delta
_{\alpha}:B_{\alpha}\cong A_{\alpha}\text{ and }\delta_{\beta
}:B_{\beta}\cong A_{\beta},$
 where $\psi_{\alpha}(V_{\alpha})=SpecB_{\alpha}$,  $\psi_{\beta}(V_{\beta})=SpecB_{\beta}$,
 and $\sigma
_{\alpha}:SpecA_{\alpha}\rightarrow SpecB_{\alpha}\text{ and }\sigma_{\beta
}:SpecA_{\beta}\rightarrow SpecB_{\beta}$ are the isomorphisms induced
from $\delta _{\alpha}$ and $\delta_{\beta }$, respectively.
\end{enumerate}
\end{definition}

Likewise, replacing admissible by relative canonical, we define $$
\mathbb{A}_{0}\left( X,\mathcal{O}_{X}\right) \unlhd\mathbb{A}_{0}\left( Y,%
\mathcal{O}_{Y}\right) $$
for two schemes $\left( X,\mathcal{O}_{X}\right) $ and $\left( Y,\mathcal{O}%
_{Y}\right) .$

Such an isomorphism $\delta_{\alpha}:A_{\alpha}\cong B_{\alpha}$ is
called a \emph{deck transformation} from $X$ into $Y.$

\begin{definition}
For two spaces $X$ and $Y$, we say
$$
\mathbb{A}\left( X\right) \cong\mathbb{A}\left( Y\right)
$$
if there are relations $$\mathbb{A}\left( X\right) \trianglelefteq\mathbb{A}\left(
Y\right) \text{ and }\mathbb{A}\left( X\right) \trianglerighteq\mathbb{A}\left(
Y\right) .$$
\end{definition}

Likewise, replacing admissible by relative canonical, we define $$
\mathbb{A}_{0}\left( X,\mathcal{O}_{X}\right) \cong \mathbb{A}_{0}\left( Y,%
\mathcal{O}_{Y}\right) $$
for two schemes $\left( X,\mathcal{O}_{X}\right) $ and $\left( Y,\mathcal{O}%
_{Y}\right) .$

\subsection{Statements of the Main Theorems}

Now we give the statements of the two main theorems of the present paper.

\begin{theorem}
Let $X$ and $Y$ be two topological spaces such
that either $\mathbb{A}\left( X\right)\not=\emptyset$ or $\mathbb{A}\left(
Y\right)\not=\emptyset$ holds. Then $X$ and $Y$ are homeomorphic if and
only if there is $$ \mathbb{A}\left( X\right) =\mathbb{A}\left( Y\right).$$
\end{theorem}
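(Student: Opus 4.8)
The plan is to treat the two implications separately: the forward one by transporting structures along the homeomorphism, and the converse by reconstructing a homeomorphism out of the coincidence of chart rings. Throughout I will use that any affine atlas generates a unique complete one (Proposition 2.1) and that the affine charts of a complete atlas form a base for the topology.

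First I would prove the forward direction. Suppose $g\colon X\to Y$ is a homeomorphism and let $\mathcal{A}(X)\in\mathbb{A}(X)$ be an admissible affine structure with charts $(U_\alpha,\varphi_\alpha)$, $\varphi_\alpha(U_\alpha)=\operatorname{Spec}A_\alpha$. Setting $V_\alpha=g(U_\alpha)$ and $\psi_\alpha=\varphi_\alpha\circ g^{-1}$ gives pairs $(V_\alpha,\psi_\alpha)$ with $\psi_\alpha(V_\alpha)=\operatorname{Spec}A_\alpha$, the \emph{same} rings. One checks the transported collection satisfies the three conditions of Definition 2.2 for the same pseudogroup—each is preserved because $g$ is a homeomorphism and all the localization data is carried over verbatim—so it is an affine atlas; and if $(X,\mathcal{F})$ is an extension of $\mathcal{A}(X)$, then $(Y,g_{\ast}\mathcal{F})$ is an extension of the transported atlas by Corollary 3.1, so the latter is admissible. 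Completing it and recording its chart rings yields $\mathbb{A}(X)\subseteq\mathbb{A}(Y)$; running the argument with $g^{-1}$ gives the reverse inclusion, hence $\mathbb{A}(X)=\mathbb{A}(Y)$.

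For the converse, assume $\mathbb{A}(X)=\mathbb{A}(Y)$ with, say, $\mathbb{A}(X)\neq\emptyset$. Since the inclusion $\mathbb{A}(Y)\subseteq\mathbb{A}(X)$ is non-vacuous exactly when $\mathbb{A}(Y)$ possesses charts, the hypothesis forces $\mathbb{A}(Y)\neq\emptyset$ as well. Fix $\mathcal{A}(X)\in\mathbb{A}(X)$ with charts $(U_\alpha,\varphi_\alpha)$ covering $X$ and $\varphi_\alpha(U_\alpha)=\operatorname{Spec}A_\alpha$. By $\mathbb{A}(X)\subseteq\mathbb{A}(Y)$ each $A_\alpha$ recurs as a chart ring on $Y$, say $(V_\alpha,\psi_\alpha)$ with $\psi_\alpha(V_\alpha)=\operatorname{Spec}A_\alpha$; then $h_\alpha\colon U_\alpha\to V_\alpha$, $h_\alpha=\psi_\alpha^{-1}\circ\varphi_\alpha$, is a homeomorphism onto an open subset of $Y$. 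I would then glue the $h_\alpha$ into a global map $h\colon X\to Y$ and check that $h$ is a well-defined continuous open bijection; for surjectivity and openness I would run the mirror construction using $\mathbb{A}(Y)\subseteq\mathbb{A}(X)$ to obtain $X$-side charts whose images cover $X$, and compare the two maps on the common base of affine charts.

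The hard part will be the gluing, i.e. showing the independently chosen local homeomorphisms are compatible, $h_\alpha|_{U_\alpha\cap U_\beta}=h_\beta|_{U_\alpha\cap U_\beta}$. Nothing in the bare coincidence of chart-\emph{ring} sets forces the $Y$-charts $(V_\alpha,\psi_\alpha)$ and $(V_\beta,\psi_\beta)$ to be compatibly placed in $Y$, so agreement on overlaps is not automatic. My strategy would be to leverage the equality of the \emph{whole} collections rather than a single matched pair: on an overlap I pass, via the third condition of Definition 2.2, to a refining chart $(W_{\alpha\beta},\varphi_{\alpha\beta})$ whose ring is a common localization $(A_\alpha)_{f_\alpha}\cong(A_\beta)_{f_\beta}$, transfer this \emph{same} localized ring to the $Y$-side, and argue that the two candidate values $h_\alpha(x)$ and $h_\beta(x)$ at $x\in W_{\alpha\beta}$ are each determined by the prime of this common localization and so coincide. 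I expect this rigidification—forcing the gluing on $Y$ to be dictated by the ring data alone—to be the delicate core of the proof, the step where maximality of the atlases and the localization axioms of the pseudogroup (Definition 2.1, conditions 4--5) must do the real work.
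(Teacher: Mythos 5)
Your forward direction is essentially the paper's own argument (transport each chart $(U_\alpha,\varphi_\alpha)$ along the homeomorphism, keeping the same rings), and it is fine; the paper is even terser than you are there.

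The converse is where there is a genuine gap, and although you correctly diagnose the difficulty, your proposed repair does not close it. Once you fix, for each chart $(U_\alpha,\varphi_\alpha)$ of a single structure $\mathcal{A}(X)$, one $Y$-chart $(V_\alpha,\psi_\alpha)$ with the same ring, the maps $h_\alpha=\psi_\alpha^{-1}\circ\varphi_\alpha$ simply need not agree on overlaps, and no localization argument can force them to: the prime of the common localization $(A_\alpha)_{f_\alpha}\cong(A_\beta)_{f_\beta}$ determines $h_\alpha(x)$ only inside $V_\alpha$ and $h_\beta(x)$ only inside $V_\beta$, and these two open sets were chosen independently of one another --- they may even be disjoint. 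Concretely, if $Y$ contains two disjoint open sets each carrying a chart with ring $A_\alpha$ (compare the paper's own example $\operatorname{Spec}\mathbb{Z}\sqcup\operatorname{Spec}\mathbb{Z}$ in \S 4), then $V_\alpha$ can sit in one copy and $V_\beta$ in the other, and $h_\alpha$, $h_\beta$ disagree at every point of $U_\alpha\cap U_\beta$. So the assertion you plan to prove --- that a fixed system of choices glues --- is false as stated; what can be true is only the existential statement that some compatible system of identifications exists, and that existential statement is the real content of the theorem.

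The paper's proof is organized precisely so that no choices are ever fixed. It forms the disjoint union $\Sigma$ of all spectra $\operatorname{Spec}A_\alpha$ attached to all charts of all structures in $\mathbb{A}(X)$, introduces two equivalence relations on $\Sigma$ --- $x\thicksim_X y$ iff \emph{some} pair of $X$-charts pulls $x,y$ back to the same point of $X$, and $x\thicksim_Y y$ analogously using $Y$-charts --- and reduces the theorem to the claim that $\thicksim_X$ and $\thicksim_Y$ coincide, after which $X\cong\Sigma/\thicksim_X$ and $Y\cong\Sigma/\thicksim_Y$ as quotient spaces. Proving that claim uses the two ingredients absent from your outline: first, the hypothesis $\mathbb{A}(X)\neq\emptyset$ is used to fix an extension scheme $(X,\mathcal{O}_X)$ and its maximal pseudogroup $\Gamma^{max}_{X,\mathcal{O}_X}$, which supplies the refining affine open $U_{\alpha\beta}\subseteq U_\alpha\cap U_\beta$ together with $\sigma_{\alpha\beta}\colon (A_\alpha)_{f_\alpha}\cong(A_\beta)_{f_\beta}$ inside the pseudogroup; second, instead of comparing two pre-chosen $Y$-charts, the paper \emph{builds a new chart} $(V_{\alpha\beta},\psi_\beta\circ\psi_{\beta\alpha}^{-1})$ on $Y$ from $\sigma_{\alpha\beta}$ and shows it lies in some admissible structure in $\mathbb{A}(Y)$ by enlarging the pseudogroup (the $\Gamma_1$ generated by $\Gamma_0$, the identity of $(A_\beta)_{f_\beta}$, and isomorphisms of localizations) and invoking compatibility with maximality. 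It is this freedom to manufacture new admissible structures on $Y$ witnessing each identification --- not a rigidity argument about fixed charts --- that makes the step go through; your proposal would have to be restructured along these lines, and would also still owe the final step (the paper's step $(iv)$) matching the quotient topologies on the two sides.
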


\begin{theorem}
Any two schemes $\left( X,\mathcal{O}_{X}\right)
$ and $ \left( Y, \mathcal{O}_{Y}\right) $ are isomorphic if and only if we have $$
\mathbb{A}_{0}\left( X,\mathcal{O}_{X}\right) \cong \mathbb{A}_{0}\left( Y,
\mathcal{O}_{Y}\right) .$$
\end{theorem}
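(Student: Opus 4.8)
The plan is to prove the two implications separately, treating the ``only if'' direction as essentially formal and concentrating the genuine work on reconstructing a scheme isomorphism from the abstract relation $\mathbb{A}_{0}\left(X,\mathcal{O}_{X}\right)\cong\mathbb{A}_{0}\left(Y,\mathcal{O}_{Y}\right)$.

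For the ``only if'' direction, suppose $(f,f^{\#}):\left(X,\mathcal{O}_{X}\right)\to\left(Y,\mathcal{O}_{Y}\right)$ is an isomorphism of schemes. Then $f$ is a homeomorphism carrying affine open subsets to affine open subsets, and $f^{\#}$ restricts on each such subset to a ring isomorphism on sections. First I would check that $f$ transports a canonical pseudogroup $\Gamma_{X,\mathcal{O}_{X}}$ to a canonical pseudogroup $\Gamma_{Y,\mathcal{O}_{Y}}$, so that relative canonical affine structures correspond and one obtains a bijection $\mathbb{A}_{0}\left(X,\mathcal{O}_{X}\right)\to\mathbb{A}_{0}\left(Y,\mathcal{O}_{Y}\right)$. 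Verifying $\mathbb{A}_{0}\left(X,\mathcal{O}_{X}\right)\unlhd\mathbb{A}_{0}\left(Y,\mathcal{O}_{Y}\right)$ then reduces to checking the three conditions: \textbf{Local Isomorphism} follows because $f^{\#}$ supplies the required ring isomorphism $\delta_{\alpha}$; \textbf{Covering} follows because the homeomorphism $f$ preserves open coverings; and \textbf{Filtering} follows because $f$ is a bijection on points compatible with the chart homeomorphisms. The reverse relation $\unrhd$ is obtained symmetrically from $(f^{-1},(f^{\#})^{-1})$.

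For the ``if'' direction I would reconstruct the isomorphism chart by chart. By Proposition 4.3 every relative canonical affine structure is admissible with extension isomorphic to the ambient scheme, so it suffices to build a scheme isomorphism between two fixed extensions. From the \textbf{Local Isomorphism} condition I extract, for each affine chart $\left(U_{\alpha},\varphi_{\alpha}\right)$ of $X$ with $\varphi_{\alpha}\left(U_{\alpha}\right)=Spec\,A_{\alpha}$, a chart $\left(V_{\alpha},\psi_{\alpha}\right)$ of $Y$ with $\psi_{\alpha}\left(V_{\alpha}\right)=Spec\,B_{\alpha}$ and a deck transformation $\delta_{\alpha}:B_{\alpha}\cong A_{\alpha}$; this yields homeomorphisms $\sigma_{\alpha}:Spec\,A_{\alpha}\to Spec\,B_{\alpha}$ and local maps $f_{\alpha}=\psi_{\alpha}^{-1}\circ\sigma_{\alpha}\circ\varphi_{\alpha}:U_{\alpha}\to V_{\alpha}$. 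The \textbf{Covering} condition guarantees that the $V_{\alpha}$ exhaust $Y$, and the \textbf{Filtering} condition guarantees that the $f_{\alpha}$ agree on overlaps at the level of points, so they glue to a continuous map $f:X\to Y$; applying the same construction to $\unrhd$ produces an inverse, whence $f$ is a homeomorphism. On each chart the ring isomorphism $\delta_{\alpha}$ induces a sheaf isomorphism $\mathcal{O}_{Y}\mid_{V_{\alpha}}\cong f_{\ast}\mathcal{O}_{X}\mid_{U_{\alpha}}$, and I would glue these, using Proposition 3.1 to identify sections on overlaps, into a global sheaf isomorphism $f^{\#}:\mathcal{O}_{Y}\to f_{\ast}\mathcal{O}_{X}$.

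The hard part will be the gluing in the ``if'' direction. The three conditions are stated only at the level of individual charts and individual points, whereas assembling a genuine scheme isomorphism requires coherence of both the point maps $\sigma_{\alpha}$ and the ring isomorphisms $\delta_{\alpha}$ on overlaps. For the topological map, the \textbf{Filtering} condition is tailored precisely to force single-valuedness of $f$; the delicate point is that the charts $\left(V_{\alpha},\psi_{\alpha}\right)$ produced for different $\alpha$ need not literally coincide, so I must show that the gluing is independent of these choices, where the maximality built into relative canonical structures is used. For the sheaf map, the genuine obstacle is verifying that the $\delta_{\alpha}$ are compatible after localization along intersections $U_{\alpha}\cap U_{\beta}$; here I expect to reduce the compatibility to the uniqueness in Proposition 3.1, since both $\mathcal{O}_{X}$ and the pullback of $\mathcal{O}_{Y}$ are extensions of the \emph{same} transported affine structure on $X$ and must therefore coincide.
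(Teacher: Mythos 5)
Your proposal has the same skeleton as the paper's proof and is correct to at least the same degree of rigor: the ``only if'' direction transports charts through the isomorphism and verifies the three conditions of Definition 5.3 exactly as the paper does (the paper likewise reduces Filtering to the claim that $x\thicksim_{X}y$ iff $x\thicksim_{Y}y$, recycled from the proof of Theorem 5.1), and in the ``if'' direction both arguments extract local maps from Local Isomorphism, use Covering for surjectivity, Filtering for coherence, and symmetry for the inverse. The mechanical difference lies in how the gluing is organized. You glue the local maps $f_{\alpha}=\psi_{\alpha}^{-1}\circ\sigma_{\alpha}\circ\varphi_{\alpha}$ in place and then invoke Proposition 4.3 together with the uniqueness part of Proposition 3.1 to assemble the sheaf isomorphism; the paper instead materializes the same gluing as a chain of auxiliary objects --- the disjoint unions $\Sigma^{*}$ (of the $SpecA_{i,\alpha}$) and $\Sigma$ (of the $SpecB_{\alpha}$), their quotients $\Sigma^{*}_{X}$, $\Sigma^{*}_{\Sigma}$, $\Sigma_{Y}$ by relations $\thicksim_{X}$, $\thicksim_{\Sigma}$, $\thicksim_{Y}$, and isomorphisms $\rho_{X},\rho_{\Sigma},\rho_{Y}$ --- and defines $\tau$ by $\tau(\rho_{X}\circ\pi_{X}(z))=\rho_{Y}\circ\pi_{Y}\circ\rho_{\Sigma}\circ\pi_{\Sigma}(z)$, which is pointwise the identical map to yours. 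What each buys: your route makes the structure-sheaf step explicit (the paper only asserts that the quotients ``are schemes in an evident manner'' and that $\tau$ is ``immediately'' a scheme isomorphism), while the paper's route packages the point-level well-definedness cleanly as its step $(iv)$. Note finally that the ``hard part'' you flag --- Filtering is only an existence statement, so one must show the construction is independent of the choice of charts $(V_{\alpha},\psi_{\alpha})$ and deck transformations $\delta_{i,\alpha}$ --- is precisely the unresolved point in the paper's step $(iv)$: the fixed $\delta_{i,\alpha}$ enter the definition of $\thicksim_{\Sigma}$, yet Filtering only yields agreement for some possibly different charts and isomorphisms, and the paper passes from one to the other without the maximality argument you sketch. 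So on the two points where the real work lies, your proposal is more explicit than the source, not less.
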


We will prove Theorems 5.1 and 5.2 in \S 7 and \S 8, respectively.

\begin{remark}
From the two main theorems above it is seen that the whole of affine structures on a space and the underlying space of a scheme, as local data of the space, encode the global data of the space and the scheme, in particular, the global topology of the space and the scheme, respectively.
\end{remark}

\begin{remark}
In Theorem 5.2 the condition $$ \mathbb{A}_{0}\left(
X,\mathcal{O}_{X}\right) \cong \mathbb{A}_{0}\left( Y,
\mathcal{O}_{Y}\right)$$ can not be replaced by $$ \mathbb{A}\left(
X,\mathcal{O}_{X}\right) = \mathbb{A}\left( Y, \mathcal{O}_{Y}\right).$$
For example, consider $X=Spec({\mathbb{Q}})$ and
$Y=Spec({\mathbb{Q}}(\sqrt{2}))$.
\end{remark}

\section{Concluding Remarks}

From Remark 4.2 and Theorem 5.2 we have the following proposition, a comparison between two schemes of the same underlying space.

\begin{proposition}
{Let $\left( X,\mathcal{O}_{X}\right) $ and $ \left(
X,\mathcal{O}^{\prime}_{X}\right)$ be two schemes. The
following statements are equivalent.}
\begin{itemize}
\item $\mathbb{A}_{0}\left( X,\mathcal{O}_{X}\right) \cong
\mathbb{A}_{0}\left(X, \mathcal{O}^{\prime}_{X}\right) $ holds.

\item $\left( X,\mathcal{O}_{X}\right) $ and $ \left(
X,\mathcal{O}^{\prime}_{X}\right)$ are isomorphic schemes.

\item There is an isomorphism $ \left(
X,\mathcal{O}_{X}^{\symbol{94}}\right) \cong \left(
X,\mathcal{O}_{X}^{\prime \symbol{94}}\right)$ for any associate
schemes $\left( X,\mathcal{O}_{X}^{\symbol{94}}\right)$ of $\left(
X,\mathcal{O}_{X}\right) $ and $\left( X,\mathcal{O}_{X}^{\prime
\symbol{94}}\right) $ of and $\left( X,\mathcal{O}_{X}^{\prime
}\right) $.
\end{itemize}
\end{proposition}

\begin{example}
{Let $K/k$ be a Galois extension. Then $SpecK$ has a unique
associate scheme and there exists a unique admissible $k-$affine
structure in the scheme $SpecK$.}
\end{example}

\begin{definition}
{A scheme $\left( X,\mathcal{O}_{X}\right)$ is said to
\textbf{have a property} $P$ \textbf{for an admissible affine
structures} $\mathcal{A}$ on $X$ if as a scheme any extension
$\left( X,\mathcal{O}_{ \mathcal{A}\left( \Gamma \right) }\right)$
of $\mathcal{A}$ has that property $P$.}
\end{definition}

\begin{remark}
Let $\left( X,\mathcal{O}_{\mathcal{X}}\right) $ be a scheme. There are the following conclusions.
\begin{itemize}
\item Fixed an associate scheme
$(X,\mathcal{O}_{\mathcal{A}})$ of
$(X,\mathcal{O}_{X})$. In general, it is not true that $\left( X,\mathcal{O}_{\mathcal{A}%
}\right)=(X,\mathcal{O}_{X}) $ although they are isomorphic.

\item {There can be a scheme }$\left( X,\mathcal{O}_{X}\right) $
 and an admissible affine structure $\mathcal{A}$ on
the space $X$ such that there is some property $P$
that $\left(
X,\mathcal{O}_{X}\right) $ holds but an extension $\left( X,\mathcal{O}_{\mathcal{A}}\right) $ of $\mathcal{A}$ does not hold.

\item One says that a scheme $\left( X,\mathcal{O}
_{X}\right) $ has a property $P$. But it is not specified that the
property $P$ holds for some certain or all the admissible affine
structures on the space $X$.

\quad This situation is very similar to that in differential
topology. As usual, a differential manifold is said to have some
property if the property holds for all the differential structures
until such a structure is especially specified.

\quad It has been known that
there is some property $P$ on some manifold $X$ which does not hold for any
other differential structures on $X$.
\end{itemize}
\end{remark}

\begin{remark}
In a precise and rigour manner, a scheme is defined to be a ringed space
together with a specified admissible affine structure on it if the affine structures
are in action in a particular case.
\end{remark}

\begin{remark}
{Let $\left( X,\mathcal{O}_{X}\right) $ be a
scheme. For any $\mathcal{A},\mathcal{B}\in \mathbb{A}%
\left( X\right) $, we say $$\mathcal{A}\sim \mathcal{B}$$ if and only
if there is an isomorphism $\left(
X,\mathcal{O}_{\mathcal{A}}\right) \cong \left(
X,\mathcal{O}_{\mathcal{B}}\right) .$}

{Then the quotient set $$\mathbb{A}%
\left( X\right) /\sim $$ is the whole of the schemes on the space $X$
upon isomorphisms.}
\end{remark}

\section{Proof of the First Main Theorem}

In this section we give the proof of Theorems 5.1.

\begin{proof}
\textbf{(Proof of Theorem 5.1)}
As $\mathbb{A}\left( X\right)\not=\emptyset$ and $\mathbb{A}\left(
Y\right)\not=\emptyset$, we can choose two admissible affine structures
on $X$ and $Y$ respectively. Fixed their extensions $\left(
X,\mathcal{O}_{X}\right) $ and $\left( Y, \mathcal{O}_{Y}\right)$, which are
schemes.

$\implies$. Prove that $\mathbb{A}\left( X\right) \subseteq
\mathbb{A}\left( Y\right)$ holds.

In deed, take any affine chart $\left( U_{\alpha },\varphi _{\alpha }\right)
$ contained in an affine structure $\mathcal{A}\left( X\right) $
belonging to $\mathbb{A}\left( X\right) ,$ where $\varphi _{a}\left(
U_{\alpha }\right) =SpecA_{\alpha }$ and $A_{\alpha }\in
\mathfrak{Comm}.$

Then $\left( \tau \left( U\right) ,\varphi _{\alpha }\circ
\tau ^{-1}\right) $ is an affine chart contained in an affine structure $$
\mathcal{A}\left( \tau \left( X\right) \right) =\mathcal{A}\left( Y\right) $$
belonging to $\mathbb{A}\left( Y\right) .$ Hence, we have
$$\mathbb{A}\left( X\right) \subseteq \mathbb{A}\left( Y\right) .$$

Similarly, we have $$\mathbb{A}\left( X\right) \supseteq \mathbb{A}\left(
Y\right).$$ So $$\mathbb{ A}\left( X\right) = \mathbb{A}\left( Y\right).$$

$\impliedby$. Let $\mathbb{A}\left( X\right) =\mathbb{A}\left( Y\right)
$. We will prove that there exists a homeomorphism
$$\tau:X\longrightarrow Y.$$ We will proceed in several steps.

$(i)$ Take an affine chart $\left( U_{\alpha},\varphi_{\alpha}\right) $ contained
in an affine structure
$\mathcal{A}\left( X\right) $ belonging to $\mathbb{A}\left( X\right)$, where
$
A_{\alpha}\in\mathfrak{Comm}$ is a commutative ring and $$\varphi_{a}\left( U_{\alpha}
\right) =SpecA_{\alpha}
.$$ Then there
is an affine chart $\left( V_{\alpha},\psi_{\alpha}\right) $ contained in an
affine structure $\mathcal{A}\left( Y\right) $ belonging to $\mathbb{A}%
\left( Y\right) $ such that $$ \psi_{\alpha
}\left( V_{\alpha}\right) =SpecA_{\alpha}.$$

The converse is true since we have $$\mathbb{A}\left( X\right) =\mathbb{A}\left( Y\right)
.$$

Let $\Sigma$ be the disjoint union of all such open sets $SpecA_{\alpha}$. Take any
points $x,y\in\Sigma$.

We say $$x\thicksim_{X} y$$ if there exist admissible affine
structures $\mathcal{A}(\Gamma_{\alpha})$ and $\mathcal{A}(\Gamma_{\beta})$
contained in $\mathbb{A}\left( X\right)$ satisfying the condition:
\begin{quotation}
\emph{There are affine charts $(U_{\alpha},\varphi _{\alpha })\in
\mathcal{A}(\Gamma_{\alpha}),(U_{\beta},\varphi _{\beta })\in
\mathcal{A}(\Gamma_{\beta})$ such that $\varphi _{\alpha}^{-1}(x)=
\varphi _{\beta }^{-1}(y)$, where $x\in SpecA_{\alpha}=
\varphi_{\alpha}(U_{\alpha})$ and $y\in
SpecA_{\beta}=\varphi_{\beta}(U_{\beta}).$}
\end{quotation}

Likewise, we say $$x\thicksim_{Y} y$$ if there exist admissible affine
structures $\mathcal{A}(\Gamma_{\alpha})$ and $\mathcal{A}(\Gamma_{\beta})$
contained in $\mathbb{A}\left( Y\right)$ satisfying the condition:
\begin{quotation}
\emph{There are affine charts $(V_{\alpha},\psi _{\alpha })\in
\mathcal{A}(\Gamma_{\alpha}),(V_{\beta},\psi_{\beta })\in
\mathcal{A}(\Gamma_{\beta})$ such that $\psi _{\alpha}^{-1}(x)= \psi
_{\beta }^{-1}(y)$, where $x\in SpecA_{\alpha}=
\psi_{\alpha}(V_{\alpha})$ and $y\in
SpecA_{\beta}=\psi_{\beta}(V_{\beta}).$}
\end{quotation}

$(ii)$ Let $\Sigma_{X}$ be the quotient of the set $\Sigma$ by relation $\thicksim_{X}$,
and let
$$\pi_{X}:\Sigma\longrightarrow \Sigma_{X}$$ be the canonical map.

Prove that there is a bijection $\rho_{X}$ from the set $\Sigma_{X}$ onto the set $X$.

In deed, we have a mapping
$$\rho:\Sigma \longrightarrow X$$ given by $$z\longmapsto \varphi_{\alpha}^{-1}(z)$$ where
$(U_{\alpha},\varphi
_{\alpha })$ is the affine chart contained in an affine structure belonging to
$\mathbb{A}(X)$ such that
$$z\in SpecA_{\alpha}=\varphi_{\alpha }(U_{\alpha}).$$ Then we have a map$$\rho_{X}:\Sigma_{X}
\longrightarrow X,
\pi_{X}(z)\longmapsto \rho (z).$$

Evidently, $\rho_{X}$ is a surjection. From the definition
for $\thicksim_{X}$, it is easily seen
that $\rho_{X}$ is an injection. This proves $\rho_{X}$ is a bijection.

Hence, $\Sigma_{X}$ is a topological space together with the topology on the space $X$.

Similarly, let $\Sigma_{Y}$ be the quotient of the set $\Sigma$ by relation $\thicksim_{Y}$,
and let
$$\pi_{Y}:\Sigma\longrightarrow \Sigma_{Y}$$ be the canonical map.
As $$\mathbb{A}\left( X\right) =\mathbb{A}\left( Y\right),$$
it is clear that there is a bijection $\rho_{Y}$ from the set $\Sigma_{Y}$ onto the set $Y$.
Then
$\Sigma_{Y}$ is a topological space together with the topology on the space $Y$.

$(iii)$ Take any $x,y\in\Sigma$. Prove that $$x\thicksim_{X} y$$ holds if and only if
$$x\thicksim_{Y} y$$ holds.

In deed, let $x\thicksim_{X} y$, that is, we have $$\varphi
_{\alpha}^{-1}(x)= \varphi _{\beta }^{-1}(y)$$ for some affine charts
$$(U_{\alpha},\varphi _{\alpha })\in
\mathcal{A}(\Gamma_{\alpha})\text{ and }(U_{\beta},\varphi _{\beta })\in
\mathcal{A}(\Gamma_{\beta})$$ such that $$x\in
SpecA_{\alpha}=\varphi_{\alpha}(U_{\alpha})\text{ and }y\in
SpecA_{\beta}=\varphi_{\beta}(U_{\beta}).$$

Let $\Gamma^{max}_{X,\mathcal{O}_{X}}$be the maximal
pseudogroup of affine transformations in the scheme $\left(X,\mathcal{O}_{X}\right) $.

We choose the above open sets
$U_{\alpha}$ and $U_{\beta}$ to be affine open subsets of the scheme $\left(X,
\mathcal{O}_{X}\right) $. That is, $U_{\alpha}$ and $U_{\beta}$
are contained in the pseudogroup $\Gamma^{max}_{X,\mathcal{O}_{X}}$.

Then there is an affine open subset $U_{\alpha \beta}$ of $\left(X,
\mathcal{O}_{X}\right) $ contained in
$\Gamma^{max}_{X,\mathcal{O}_{X}}$
such that $$\varphi _{\alpha}^{-1}(x)\in U_{\alpha \beta} \subseteq U_{\alpha}
\bigcap U_{\beta};$$
 $$\varphi_{\alpha}(U_{\alpha \beta})=Spec(A_{\alpha})_{f_{\alpha}};$$
 $$\varphi_{\beta}
(U_{\alpha \beta})=Spec(A_{\beta})_{f_{\beta}}$$
for some $f_{\alpha}\in A_{\alpha}$ and $f_{\beta} \in A_{\beta}$, where the isomorphism
$\sigma_{\alpha \beta}$ from
$(A_{\alpha})_{f_{\alpha}}$ onto $(A_{\beta})_{f_{\beta}}$ is contained in
$\Gamma^{max}_{X,\mathcal{O}_{X}}$.

As $$\mathbb{A}\left( X\right) =\mathbb{A}\left( Y\right),$$ we have affine charts
$(V_{\alpha},\psi_{\alpha})$
and $(V_{\beta},\psi_{\beta})$ respectively contained in some affine structures
belonging to $\mathbb{A}(Y)$, where
$$V_{\alpha}=\psi_{\alpha}^{-1}(SpecA_{\alpha})\text{ and
}V_{\beta}=\psi_{\beta}^{-1}(SpecA_{\beta}).$$

Set $$V_{\alpha \beta}=\psi_{\alpha}^{-1}(Spec(A_{\alpha})_{f_{\alpha}});$$
$$V_{\beta \alpha}=\psi_{\beta}^{-1}(Spec(A_{\beta})_{f_{\beta}}).$$
Denote by $\psi_{\beta \alpha}$ the homeomorphism of $V_{\beta \alpha}$ onto
$V_{\alpha \beta}$ which is induced
from $\sigma_{\alpha \beta}$.

It is easily seen that $(V_{\alpha \beta},\psi_{\beta}\circ\psi_{\beta \alpha}^{-1})$
is an affine chart contained in
some admissible affine structure belonging to $\mathbb{A}(Y)$. In fact, fixed any
admissible affine $\Gamma_{0}-$structure $\mathcal{A}(\Gamma_{0})$ on the space $Y$ which
contains the affine chart $(V_{\alpha},\psi_{\alpha})$. Let $\Gamma_{1}$ be the pseudogroup
of affine transformations
in $\frak{comm}$ generated by the union of $\Gamma_{0}$ and the set of the identity on
$(A_{\beta})_{f_{\beta}}$ and all
the possible isomorphisms between the localisations of the rings. Then
$\{(V_{\alpha \beta},\psi_{\beta}\circ\psi_{\beta \alpha}^{-1})\}$
and $\mathcal{A}(\Gamma_{1})$ are $\Gamma_{1}-$compatible. Hence,
$$(V_{\alpha \beta},\psi_{\beta}\circ\psi_{\beta \alpha}^{-1})\in \mathcal{A}(\Gamma_{1}).$$

Consider $$y\in V_{\beta
\alpha};$$ $$x\in Spec(A_{\alpha})_{f_{\alpha}}\subseteq SpecA_{\alpha}.$$ It is evident that $$\psi_{\alpha}^{-1}(x)=
\psi_{\beta \alpha}\circ \psi_{\beta}^{-1}(y)$$ holds since $$\psi_{\alpha}^{-1}
(Spec(A_{\alpha})_{f_{\alpha}})
=\psi_{\beta \alpha}\circ \psi_{\beta}^{-1}(Spec(A_{\beta})_{f_{\beta}})=V_{\alpha \beta}.$$
This proves $$x\thicksim_{Y}y$$ holds.

In a similar manner, it is seen that the converse is true.

$(iv)$ The map from $\Sigma_{X}$ into $\Sigma_{Y}$ defined by $$\pi_{X}(z)\longmapsto
\pi_{Y}(z)$$ for $z\in \Sigma$
gives us a bijection $$\tau:X\longrightarrow Y,$$ which is well-defined from $(iii)$.

All the open sets $SpecA_{\alpha}$ determine a topology on the set
$\Sigma$ in such a manner:
\begin{quotation}
\emph{A subset $W$ of $\Sigma$ is open if and only if $\pi_{X}(W)$ is open
in $\Sigma_{X}$.}
\end{quotation}

It follows that $\Sigma_{X}$ is the quotient space of $\Sigma$ by $\pi_{X}$.
As $\mathbb{A}\left( X\right) =\mathbb{A}\left( Y\right)$, $\Sigma_{Y}$ is
the quotient space of $\Sigma$ by $\pi_{Y}$. Hence, $$\tau:X\longrightarrow
Y$$ is a homeomorphism.

This completes the proof.
\end{proof}

\section{Proof of the Second Main Theorem}

In this section we give the proof of Theorem 5.2.

\begin{proof}
\textbf{(Proof of Theorem 5.2)}
$\implies$. Let $$\tau:\left( X,\mathcal{O}_{X}\right) \cong \left( Y,
\mathcal{O}_{Y}\right)$$ be an isomorphism.

As $$\tau _{\ast }\mathcal{O}_{X}\cong \mathcal{O}_{Y},$$ we have
\begin{equation*}
\begin{array}{l}
\left( \varphi _{\alpha }^{-1}\left( SpecA_{\alpha }\right) ,\left(
\varphi _{\alpha }^{-1}\right) _{\ast }\widetilde{A_{\alpha }}\right)\\

\cong\left( U_{\alpha },\mathcal{O}_{X}\mid _{U_{\alpha }}\right) \\

\cong \left( \tau \left( U_{\alpha }\right) ,\tau _{\ast }\mathcal{O}
_{X}\mid _{U_{\alpha }}\right) \\

\cong \left( \tau \left( U_{\alpha }\right) ,\mathcal{O}_{Y}\mid
_{\tau \left( U_{\alpha }\right) }\right) \\

\cong\left( \psi _{\alpha }^{-1}\left( SpecB_{\alpha }\right) ,\left( \psi
_{\alpha }^{-1}\right) _{\ast }\widetilde{B_{\alpha }}\right)
\end{array}
\end{equation*}
for any affine open set $U_{\alpha}$ of $X$ such that $$\varphi _{\alpha }
\left( U_{\alpha }\right) =SpecA_{\alpha }$$ and $\left( U_{\alpha },\varphi
_{\alpha }\right) $ is contained in some canonical affine structure
belonging
to $\mathbb{%
A}_{0}\left( X,\mathcal{O}_{X}\right)$.

Then $\left( \tau \left( U_{\alpha }\right) ,\varphi _{\alpha }\right) $ is an affine
chart with $$\psi _{\alpha }\left( \tau \left( U_{\alpha }\right) \right)
=SpecB_{\alpha },$$ which is contained in some canonical affine structure
belonging to $\mathbb{A}_{0}\left( Y,\mathcal{O}_{Y}\right) $.

By the
isomorphism $\tau$ it is easily seen that the conditions $\emph{1}$-$\emph{2}$ in
Definition 5.3 are satisfied.

Now let $\Gamma_{X}$ and $\Gamma_{Y}$ be the maximal pseudogroups
of affine transformations in the schemes $\left( X,\mathcal{O}_{X}\right)$
and $\left( Y,\mathcal{O}_{Y}\right)$, respectively. Via the isomorphism
$\tau$, every affine chart in $\mathcal{A}^{*}(\Gamma_{X})$ is an affine chart
in the $\mathcal{A}^{*}(\Gamma_{Y})$; the converse is true.

Using the same
procedure in proving Theorem 5.1, we can prove such a claim that
\textquotedblleft $x\thicksim _{X}y$\textquotedblright\ for $X$ holds if and
only if \textquotedblleft $x\thicksim _{Y}y$\textquotedblright\ for $Y$
holds.

It follows that the condition $\emph{3}$ in Definition 5.3 is satisfied. Hence, we
have
$$\mathbb{A}_{0}\left( X,\mathcal{O}_{X}\right) \trianglelefteq \mathbb{A}%
_{0}\left( Y,\mathcal{O}_{Y}\right).$$

Similarly, we prove $$
\mathbb{A}_{0}\left( X,\mathcal{O}_{X}\right) \trianglerighteq \mathbb{A}%
_{0}\left( Y,\mathcal{O}_{Y}\right).$$ This proves $$\mathbb{A}_{0}
\left( X,\mathcal{O}_{X}\right) \cong \mathbb{A}_{0}\left( Y,%
\mathcal{O}_{Y}\right).$$

$\impliedby$. Put $$\mathbb{A}_{0}\left( X,\mathcal{O}_{X}\right) \cong \mathbb{A}%
_{0}\left( Y,\mathcal{O}_{Y}\right) .$$ We prove that there exists an
isomorphism from $\left( X,\mathcal{O}_{X}\right) $ onto $\left(
Y,\mathcal{O}_{Y}\right) $.

In the following we will proceed in several steps in a manner similar to the procedure in proving
Theorem 5.1.

$(i)$ Let $\Gamma_{Y}$ be the maximal pseudogroup of affine
transformations in the scheme $\left(Y,\mathcal{O}_{Y}\right) $. We obtain a
relative canonical affine $\Gamma_{Y}-$structure $$\mathcal{A}\left(
\Gamma_{Y}\right)\triangleq\mathcal{A}^{*}\left( \Gamma_{Y}\right) $$ on
$Y$. Then $V_{\alpha}$ is an affine open set in the scheme
$(Y,\mathcal{O}_{Y})$ for every $(V_{\alpha},\psi_{\alpha})$ contained in $\mathcal{A}\left(
\Gamma_{Y}\right)$.

For each $(V_{\alpha},\psi_{\alpha})\in \mathcal{A}\left( \Gamma_{Y}\right) $
we put
$$
\psi_{\alpha}(V_{\alpha})=SpecB_{\alpha}
$$
where $B_{\alpha}$ is a commutative ring contained in the pseudogroup
$\Gamma_{Y}$.

From Definition 4.1 we have $$\mathcal{A}\left( \Gamma_{Y}\right) \supseteq
\mathcal{A}^{*}\left( \Gamma_{Y}\right);$$ then
$$\bigcup_{(V_{\alpha},\psi_{\alpha})\in \mathcal{A}\left( \Gamma_{Y}\right) }
V_{\alpha} \supseteq Y.$$

Let $\Sigma$ be the disjoint union of all the open sets $SpecB_{\alpha}$
such that $$\psi_{\alpha}(V_{\alpha})=SpecB_{\alpha}\text{ and
}(V_{\alpha},\psi_{\alpha})\in \mathcal{A}\left( \Gamma_{Y}\right) .$$ Take
any points $x,y\in\Sigma$.

We say $$x\thicksim_{Y} y$$ if there are affine charts $(V_{\alpha},\psi _{\alpha
}),(V_{\beta},\psi_{\beta })\in \mathcal{A}\left( \Gamma_{Y}\right) $ such that
$$\psi _{\alpha}^{-1}(x)= \psi _{\beta }^{-1}(y),$$ where $$x\in SpecB_{\alpha}=
\psi_{\alpha}(V_{\alpha});$$ $$y\in SpecB_{\beta}=\psi_{\beta}(V_{\beta}).$$

$(ii)$ For each $(V_{\alpha},\psi_{\alpha})\in \mathcal{A}\left(
\Gamma_{Y}\right) $, define
$$\{(U_{i,\alpha},\varphi_{i,\alpha})\}_{i\in I_{\alpha}}$$ to be the
set of all the affine charts contained in each relative canonical
affine structures in the scheme $(X,\mathcal{O}_{X})$ such that
$$\varphi_{i,\alpha}(U_{i,\alpha})=SpecA_{i,\alpha}$$ and that there
is an isomorphism $$\delta_{i,\alpha}: A_{i,\alpha}\cong B_{\alpha}.$$

Denote by $\Delta_{X}$ the set of all such affine charts
$(U_{i,\alpha},\varphi_{i,\alpha})$, where $i\in I_{\alpha}$ and
$(V_{\alpha},\psi_{\alpha})\in \mathcal{A}\left( \Gamma_{Y}\right) $.

As $$\mathbb{A}_{0}\left( Y,\mathcal{O}_{Y}\right) \unlhd\mathbb{A}_{0}\left(
X, \mathcal{O}_{X}\right) ,$$ we have
$$\bigcup_{(U_{i,\alpha},\varphi_{i,\alpha})\in \Delta_{X}} U_{i,\alpha} \supseteq X.$$

Let $\Sigma^{*}$ be the disjoint union of all the open sets
$SpecA_{i,\alpha}$ such that $$A_{i,\alpha}\cong B_{\alpha}\text{
and }(U_{i,\alpha},\varphi_{i,\alpha})\in \Delta_{X}.$$ Take any
$x,y\in \Sigma^{*}$.

We say $$x\thicksim_{X} y$$ if there are affine charts
$(U_{i,\alpha},\varphi _{i,\alpha }),(U_{j,\beta},\varphi _{j,\beta
})\in \Delta_{X}$ such that $$\varphi _{i,\alpha}^{-1}(x)= \varphi
_{j,\beta }^{-1}(y)$$ holds, where $$x\in SpecA_{i,\alpha}=
\varphi_{i,\alpha}(U_{i,\alpha});$$ $$y\in
SpecA_{j,\beta}=\varphi_{j,\beta}(U_{j,\beta}).$$

We say $$x\thicksim_{\Sigma} y$$ if there are affine charts
$(U_{i,\alpha},\varphi _{i,\alpha }),(U_{j,\beta},\varphi _{j,\beta })\in
\Delta_{X}$ such that $$\sigma _{i,\alpha}^{-1}(x)=\sigma_{j,\beta
}^{-1}(y)$$ holds, where $$x\in SpecA_{i,\alpha}= \varphi_{i,\alpha}(U_{i,\alpha}), \\
y\in SpecA_{j,\beta}=\varphi_{j,\beta}(U_{j,\beta}),$$ and
$$\sigma
_{i,\alpha}:SpecB_{\alpha}\rightarrow SpecA_{i,\alpha}\text{ and
}\sigma_{j,\beta }:SpecB_{\beta}\rightarrow SpecA_{j,\beta}$$ are the scheme
isomorphisms induced from the ring isomorphisms $$\delta
_{i,\alpha}:A_{i,\alpha}\cong B_{\alpha}\text{ and }\delta_{j,\beta
}:A_{j,\beta}\cong B_{\beta}$$ respectively.

$(iii)$ Let $\Sigma^{*}_{X}$ be the quotient of the set $\Sigma^{*}$ by
$\thicksim_{X}$, and let
$$\pi_{X}:\Sigma^{*}\longrightarrow \Sigma^{*}_{X}$$ be the canonical map.
We have got schemes $\Sigma^{*}$  and $\Sigma^{*}_{X}$ in an evident
manner. It is clear that $\pi_{X}$ is a morphism of the schemes.

Prove that there is an isomorphism $\rho_{X}$ from the scheme
$\Sigma^{*}_{X}$ onto the scheme $X$.

In deed, we have a mapping $$\rho:\Sigma^{*} \longrightarrow X$$ given by
$$z\longmapsto \varphi_{i,\alpha}^{-1}(z),$$ where $(U_{i,\alpha},\varphi
_{i,\alpha })\in\Delta_{X}$ such that $$z\in SpecA_{i,\alpha}=\varphi_{i,\alpha
}(U_{i,\alpha}).$$ Then we have a mapping $$\rho_{X}:\Sigma^{*}_{X} \longrightarrow X,
\pi_{X}(z)\longmapsto \rho (z).$$

Evidently, $\rho_{X}$ is a surjection. From the definition for $\thicksim_{X}$,
it is  seen that $\rho_{X}$ is an injection. Hence, $\rho_{X}$ is a
homeomorphism from the space $\Sigma^{*}_{X}$ onto the space $X$. By
the construction, it is seen that $\rho_{X}$ is an isomorphism of the
schemes.

Similarly, let $\Sigma_{Y}$ be the quotient of the set $\Sigma$ by
$\thicksim_{Y}$, and let
$$\pi_{Y}:\Sigma\longrightarrow \Sigma_{Y}$$ be the canonical map.

Then $\Sigma$ and $\Sigma_{Y}$ are schemes, and $\pi_{Y}$ is a scheme
morphism. There is an isomorphism $\rho_{Y}$ from the scheme
$\Sigma_{Y}$ onto the scheme $Y$.

Let $\Sigma^{*}_{\Sigma}$ be the quotient of the set $\Sigma^{*}$ by
$\thicksim_{\Sigma}$, and let
$$\pi_{\Sigma}:\Sigma^{*}\longrightarrow \Sigma^{*}_{\Sigma}$$ be the canonical map.

Then $\Sigma^{*}_{\Sigma}$ is a scheme and $\pi_{\Sigma}$ is a morphism.
There is an isomorphism $\rho_{\Sigma}$ from the scheme
$\Sigma^{*}_{\Sigma}$ onto the scheme $\Sigma$.

$(iv)$ Take any $x,y\in \Sigma^{*}$. We prove
$$\rho_{X}\circ\pi_{X}(x)=\rho_{X}\circ\pi_{X}(y)$$ if and only if
$$\rho_{Y}\circ\pi_{Y}\circ\rho_{\Sigma}\circ\pi_{\Sigma}(x)=\rho_{Y}\circ
\pi_{Y}\circ\rho_{\Sigma}\circ\pi_{\Sigma}(y).$$

In deed, let $$\rho_{X}\circ\pi_{X}(x)=\rho_{X}\circ\pi_{X}(y).$$ We have
$$\varphi _{i,\alpha}^{-1}(x)= \varphi _{j,\beta }^{-1}(y)$$ for some affine charts
$$(U_{i,\alpha},\varphi _{i,\alpha }), (U_{j,\beta},\varphi _{j,\beta })\in
\Delta_{X}$$ such that $$x\in
SpecA_{i,\alpha}=\varphi_{i,\alpha}(U_{i,\alpha});$$ $$y\in
SpecA_{j,\beta}=\varphi_{j,\beta}(U_{j,\beta}).$$

As $$\mathbb{A}_{0}\left( X,\mathcal{O}_{X}\right) \unlhd\mathbb{A}_{0}\left(
Y, \mathcal{O}_{Y}\right) ,$$ we have affine charts $(V_{i,\alpha},\psi _{i,\alpha
})$ and $ (V_{j,\beta},\psi_{j,\beta })$ contained in
$\mathcal{A}(\Gamma_{Y})$ such that
$$\psi^{-1}_{i,\alpha}\circ\sigma
_{i,\alpha}^{-1}(x)=\psi^{-1}_{j,\beta }\circ\sigma_{j,\beta }^{-1}(y)$$
where
$$\psi _{i,\alpha }(V_{i,\alpha})=SpecB_{i,\alpha},\, \psi_{j,\beta
}(V_{j,\beta})=SpecB_{j,\beta}$$ and
$$\sigma
_{i,\alpha}:SpecB_{i,\alpha}\rightarrow SpecA_{i,\alpha}\text{ and
}\sigma_{j,\beta }:SpecB_{j,\beta}\rightarrow SpecA_{j,\beta}$$
are the isomorphisms induced from
 the ring isomorphisms
 $$\delta
_{i,\alpha}:A_{i,\alpha}\cong B_{i,\alpha}\text{ and }\delta_{j,\beta
}:A_{j,\beta}\cong B_{j,\beta}$$ respectively.

Hence, we have
$$\rho_{Y}\circ\pi_{Y}\circ\rho_{\Sigma}\circ\pi_{\Sigma}(x)=\rho_{Y}\circ\pi_{Y}
\circ\rho_{\Sigma}\circ\pi_{\Sigma}(y).$$

In a similar manner, it is seen  that the converse is true.

$(v)$ Define a map $\tau:X\longrightarrow Y$ by
$$\tau(\rho_{X}\circ\pi_{X}(z))=\rho_{Y}\circ\pi_{Y}\circ\rho_{\Sigma}\circ\pi_{\Sigma}(z)$$
for every $z\in \Sigma^{*}$.

By $(iv)$ it is seen that $\tau$ is well-defined. It is immediate that $\tau$ is the
desired scheme isomorphism from $\left( X,\mathcal{O}_{X}\right) $ onto $
\left( Y,\mathcal{O}_{Y}\right)$.

This completes the proof.
\end{proof}

\newpage

\end{document}